\documentclass[11pt]{amsart}
\usepackage{amsmath,amsthm,amssymb,xypic,array}
\usepackage[T1]{fontenc}
\usepackage{amsfonts}
\usepackage{amsmath}
\usepackage{amssymb}
\usepackage{amsthm}
\usepackage{setspace}
\usepackage[cp850]{inputenc}
\usepackage{hyperref}
\usepackage{graphicx}
\usepackage{fancyhdr}
\usepackage{tikz}
\usepackage{booktabs}
\usepackage{longtable}
\usepackage{geometry}
\usetikzlibrary{trees}
\setcounter{MaxMatrixCols}{30}
\providecommand{\U}[1]{\protect\rule{.1in}{.1in}}
\providecommand{\U}[1]{\protect\rule{.1in}{.1in}}
\providecommand{\U}[1]{\protect\rule{.1in}{.1in}}
\providecommand{\U}[1]{\protect\rule{.1in}{.1in}}
\providecommand{\U}[1]{\protect\rule{.1in}{.1in}}
\input{xy}
\xyoption{all}
\setlength{\textheight}{210mm} 
\setlength{\topmargin}{0.46cm}
\setlength{\textwidth}{150mm} 
\setlength{\evensidemargin}{0.96cm}
\setlength{\oddsidemargin}{0.96cm}

\cfoot{}
\rhead[\fancyplain{}{\bfseries\leftmark}]{\fancyplain{}{\bfseries\thepage}}
\lhead[\fancyplain{}{\bfseries\thepage}]{\fancyplain{}{\bfseries\rightmark}}

\def\leq{\leqslant}
\def\geq{\geqslant}

\def\bibaut#1{{\sc #1}}

\def\phi{\varphi}
\def\ro[#1]{{\textcolor{red}{#1}}}

\theoremstyle{theorem}

\newtheorem{Theorem}{Theorem}[section]

\newtheorem{Lemma}[Theorem]{Lemma}
\newtheorem{Proposition}[Theorem]{Proposition}

\theoremstyle{definition}

\newtheorem{Construction}[Theorem]{Construction}

\newtheorem{Definition}[Theorem]{Definition}
\newtheorem{Remark}[Theorem]{Remark}

\newtheorem{Example}[Theorem]{Example}

\numberwithin{equation}{section}

\newcommand{\arXiv}[1]{\href{http://arxiv.org/abs/#1}{arXiv:#1}}

\newcommand{\Aut}{\operatorname{Aut}}

\newcommand{\Pic}{\operatorname{Pic}}
\newcommand{\mult}{\operatorname{mult}}

\newcommand{\Exc}{\operatorname{Exc}}

\DeclareMathOperator{\Ker}{Ker}
\DeclareMathOperator{\im}{Im}


\newcommand{\QED}{\ifhmode\unskip\nobreak\fi\quad {\rm Q.E.D.}} 

\newcommand\Span[1]{\langle{#1}\rangle}

\newcommand{\f}{\varphi}

\renewcommand{\H}{\mathcal{H}}

\renewcommand{\P}{\mathbb{P}}

\begin{document}
\title{On the automorphisms of moduli spaces of curves}

\author[Alex Massarenti]{Alex Massarenti}
\address{\sc Alex Massarenti\\
SISSA\\
via Bonomea 265\\
34136 Trieste\\ Italy}
\email{alex.massarenti@sissa.it}

\author[Massimiliano Mella]{Massimiliano Mella}
\address{\sc Massimiliano Mella\\Dipartimento di Matematica e Informatica\\
Universit\`a di Ferrara\\
Via Machiavelli 35\\
44100 Ferrara\\ Italy}
\email{mll@unife.it}

\date{July 2013}
\subjclass[2010]{Primary 14H10; Secondary 14D22, 14D23, 14D06}
\keywords{Moduli space of curves, Hassett's moduli spaces, fiber type morphism, automorphisms}
\thanks{Partially supported by Progetto PRIN 2010 "\textit{Geometria sulle variet\`a algebriche}" MIUR and GRIFGA}

\maketitle

\begin{abstract}
In the last years the biregular automorphisms of the Deligne-Mumford's and Hassett's compactifications of the moduli space of $n$-pointed genus $g$ smooth curves have been extensively studied by \textit{A. Bruno} and the authors.  In this paper we give a survey of these recent results and extend our techniques to some moduli spaces appearing as intermediate steps of the Kapranov's and Keel's realizations of $\overline{M}_{0,n}$, and to the degenerations of Hassett's spaces obtained by allowing zero weights. 
\end{abstract}

\tableofcontents

\section*{Introduction and Survey on the automorphisms of moduli spaces of curves}
The moduli space of $n$-pointed genus $g$ curves is a central object in algebraic geometry. The scheme $M_{g,n}$ parametrizing smooth curves has been compactified by \textit{P. Deligne} and \textit{D. Mumford} in \cite{DM} by adding Deligne-Mumford stable curves as boundary points. In \cite{Ha} \textit{B. Hassett} introduced alternative compactifications of $M_{g,n}$ by allowing the marked points to have rational weights $0< a_i \leq 1$. In the last years \textit{A. Bruno} and the authors focused on the problem of determining the biregular automorphisms of all these compactifications, see \cite{BM1}, \cite{BM2}, \cite{Ma} and \cite{MM}.\\ 
In what follows we will summarize and contextualize these results. Furthermore in Section \ref{kk} we will extend our techniques to other moduli spaces of curves, namely Hassett's spaces appearing as intermediate steps of Kapranov's Construction \ref{kblusym} and of Keel's Construction \ref{con2}. Finally, in Section \ref{zw} we will compute the automorphism groups of the degenerations of Hassett's spaces obtained by allowing some of the weights $a_i$ to be zero.
\subsubsection*{The automorphism groups of $\overline{M}_{g,n}$}
The first fundamental result about the automorphisms of moduli spaces of curves is due to \textit{H.L. Royden} \cite{Ro} and dates back to $1971$. 
\begin{Theorem}\label{thRo}
Let $M_{g,n}^{un}$ be the moduli space of genus $g$ smooth curves marked by $n$ unordered points. If $2g-2+n\geq 3$ then $M_{g,n}^{un}$ has no non-trivial automorphisms.
\end{Theorem}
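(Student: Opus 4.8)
The plan is to work on Teichm\"uller space $T_{g,n}$, the universal cover of the moduli orbifold of $M_{g,n}^{un}$, and to exploit the interplay between its complex structure and the Teichm\"uller metric. First I would lift a hypothetical automorphism $\phi$ of $M_{g,n}^{un}$ to a biholomorphism $\tilde\phi$ of $T_{g,n}$ that normalizes the deck action of the (extended) mapping class group. Since the Bers embedding realizes $T_{g,n}$ as a bounded domain in $\mathbb{C}^{3g-3+n}$, it carries an intrinsic Kobayashi metric, and every biholomorphism is automatically a Kobayashi isometry. The crucial input is Royden's metric theorem, that on $T_{g,n}$ the Kobayashi metric coincides with the Teichm\"uller metric; consequently $\tilde\phi$ is a Teichm\"uller isometry, and the problem reduces to classifying the isometry group of $(T_{g,n}, d_{\mathrm{Teich}})$.

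The heart of the argument is infinitesimal. At the point corresponding to a Riemann surface $X$, the cotangent space to $T_{g,n}$ is the space $Q(X)$ of holomorphic quadratic differentials, and the Teichm\"uller metric is the Finsler metric dual to the $L^1$ norm $\|q\| = \int_X |q|$. The derivative of $\tilde\phi$ therefore induces a complex-linear norm-preserving isomorphism between the spaces of quadratic differentials over corresponding surfaces. I would then establish the key rigidity lemma: once the hypothesis $2g-2+n\geq 3$ places us outside the short list of exceptional surfaces where $Q(X)$ is too small or too symmetric, every linear isometry of $(Q(X), \|\cdot\|_{L^1})$ is the pullback by a conformal automorphism of $X$. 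The mechanism is that the $L^1$ norm fails to be differentiable precisely along differentials that have zeros, so the singular locus of the unit sphere records the admissible vanishing patterns and thereby reconstructs the conformal structure. This algebraic rigidity of the $L^1$ geometry, together with the careful isolation of the exceptional low-complexity cases, is the step I expect to be the main obstacle.

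Finally I would globalize and descend. Because the derivative of $\tilde\phi$ at each point is induced by a surface automorphism, $\tilde\phi$ must agree on all of $T_{g,n}$ with the action of an element of the extended mapping class group $\mathrm{Mod}^{\pm}_{g,n}$. Quotienting by $\mathrm{Mod}_{g,n}$, the automorphisms of $M_{g,n}^{un}$ are thus governed by the outer automorphism group of the mapping class group together with the residual action permuting the marked points. The last task is to verify that this group is trivial in the stated range: symmetries arising from permutations of the punctures are already absorbed by working with unordered points, and Royden's computation shows that the remaining outer automorphisms of $\mathrm{Mod}_{g,n}$ vanish once the finitely many exceptional pairs $(g,n)$ are excluded by $2g-2+n\geq 3$, so no non-trivial automorphism of $M_{g,n}^{un}$ survives.
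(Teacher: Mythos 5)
The paper contains no proof of this statement to compare against: it is Royden's theorem, quoted with the citation \cite{Ro} (and the pointer to \cite{Moc} for context), so the only meaningful comparison is with the argument behind that citation. Your outline is exactly that argument---the Royden--Earle--Kra proof---and it is sound in structure: lift to Teichm\"uller space, use the Bers embedding and Royden's identification of the Kobayashi and Teichm\"uller metrics to turn a biholomorphism into a Teichm\"uller isometry, and then invoke the infinitesimal rigidity of the $L^1$ norm on quadratic differentials, whose failure of smoothness along differentials with zeros reconstructs the underlying surface; the hypothesis $2g-2+n\geq 3$ is precisely what excludes the exceptional pairs $(0,3),(0,4),(1,1),(1,2),(2,0)$ where $Q(X)$ is too small or sporadic isometries exist (e.g.\ via $T_{1,1}\cong T_{0,4}$, $T_{1,2}\cong T_{0,5}$, $T_{2,0}\cong T_{0,6}$), and it also absorbs the kernel issues caused by the elliptic and hyperelliptic involutions acting trivially. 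Two corrections, though. First, for $n>0$ the isometry classification is due to Earle and Kra extending Royden, who treated closed surfaces; attributing the punctured case wholesale to ``Royden's computation'' is historically off, which matters if you intend to cite rather than reprove the rigidity lemma. Second, and more substantively, your final descent step misroutes through the outer automorphism group of the mapping class group: triviality of $\mathrm{Out}$ of the mapping class group is Ivanov's theorem, proved much later by entirely different methods, and it is not needed here. Once you know that every biholomorphism of $T_{g,n}$ lies in the image $\Gamma$ of the mapping class group (punctures allowed to permute, matching the unordered marking), the automorphism group of the quotient is the normalizer quotient $N_{\Aut(T_{g,n})}(\Gamma)/\Gamma=\Gamma/\Gamma$, which is trivial outright. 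Finally, your very first step---lifting an automorphism of the coarse space $M_{g,n}^{un}$ to $T_{g,n}$---is asserted in one line but requires an argument that the automorphism respects the orbifold structure (the branch locus of $T_{g,n}\to M_{g,n}^{un}$ is intrinsic, so this is standard, but it should be said).
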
 
For a contextualization of this result in the Teichm\"uller-theoretic
literature we refer to \cite{Moc}. The symmetric group on $n$ elements
$S_n$ acts naturally on the moduli spaces $M_{g,n}$ and on its
Deligne-Mumford compactification $\overline{M}_{g,n}$. Therefore
$S_{n}\subseteq\Aut(\overline{M}_{g,n})$ and Theorem \ref{thRo} gave a
strong evidence for the equality to hold, see also \cite{KM} for the
genus zero case. In particular \textit{G. Farkas} \cite[Question 4.6]{Fa} asked if it is true that $\Aut(\overline{M}_{0,n})\cong S_n$ for any $n\geq 5$, and it seems that also \textit{W. Fulton} pointed to this question.\\
Farkas himself brought Kapranov's paper \cite{Ka} to the attention of the second author and suggested that the construction of $\overline{M}_{0,n}$ as a blow-up of $\mathbb{P}^{n-3}$ given by \textit{M. Kapranov} could be useful for studying $\overline{M}_{0,n}$ by techniques of projective geometry.\\
In \cite{BM1} and \cite{BM2}, \textit{A. Bruno} and the second author, thanks to Kapranov's works \cite{Ka},  managed to translate issues on the moduli space $\overline{M}_{0,n}$ in terms of classical projective geometry of $\mathbb{P}^{n-3}$. Studying linear systems on $\mathbb{P}^{n-3}$ with particular base loci they derived a theorem on the fibrations of $\overline{M}_{0,n}$.
\begin{Theorem}\cite[Theorem 2]{BM2}\label{bmf}
Let $f:\overline{M}_{0,n}\rightarrow \overline{M}_{0,r}$ be a dominant morphism with connected fibers. Then $f$ factors with a forgetful map.
\end{Theorem}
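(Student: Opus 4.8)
The plan is to exploit Kapranov's realization of $\overline{M}_{0,n}$ as an iterated blow-up $\rho\colon \overline{M}_{0,n}\to\mathbb{P}^{n-3}$ and to recast the classification of fibrations as a problem about linear systems with assigned base loci. First I would reduce to a statement about divisors. Since $\overline{M}_{0,r}$ is projective, choose an ample class $A$ on it and set $D=f^*A$. Because $f$ is proper with connected fibers between normal varieties one has $f_*\mathcal{O}_{\overline{M}_{0,n}}=\mathcal{O}_{\overline{M}_{0,r}}$, so the projection formula gives $H^0(\overline{M}_{0,n},mD)\cong H^0(\overline{M}_{0,r},mA)$; for $m\gg0$ the class $mA$ is very ample, and therefore $f$ is exactly the morphism associated with the base-point-free semiample class $D$. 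Hence it suffices to classify the semiample divisors $D$ on $\overline{M}_{0,n}$ whose associated fibration lands onto some $\overline{M}_{0,r}$, and to show each such $D$ is the pullback of an ample class under a forgetful morphism.

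Next I would translate this into projective geometry. Writing $D=dH-\sum_I m_I E_I$ in $\Pic(\overline{M}_{0,n})$, where $H=\rho^*\mathcal{O}_{\mathbb{P}^{n-3}}(1)$ and the $E_I$ are the exceptional divisors over the linear spans of the subsets $I$ of the $n-1$ reference points, the linear system $|D|$ becomes the system $\mathcal{L}$ of degree-$d$ hypersurfaces of $\mathbb{P}^{n-3}$ with multiplicity $\geq m_I$ along each blown-up span. In Kapranov's coordinates the forgetful morphisms are linear projections from the reference points, so the theorem amounts to showing that the base conditions force $\mathcal{L}$ to be the pullback of a complete linear system under such an iterated projection, the projected-away points being exactly those that are forgotten.

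The core is then the analysis of the fibers of $f$, which I would carry out through the curves they contract. The one-dimensional boundary strata (the vital curves, indexed by partitions of $\{1,\dots,n\}$ into four blocks) span $\overline{NE}(\overline{M}_{0,n})$, and $f$ contracts exactly those $F$ with $D\cdot F=0$. The goal is to prove that the set of contracted vital curves is precisely the one compatible with forgetting a fixed subset $S$ of the marked points; in the $\mathbb{P}^{n-3}$ picture this is the statement that the generic fibre of $\mathcal{L}$ is the linear fibre of the projection recorded by the exceptional divisors $E_I$ occurring in $D$. I would run this as an induction on $n$: restricting $f$ to a boundary divisor $\delta_{0,I}\cong\overline{M}_{0,|I|+1}\times\overline{M}_{0,n-|I|+1}$ produces, by the inductive hypothesis applied to each factor, forgetful maps on the smaller moduli spaces, whose contracted loci pin down which points $f$ forgets near that stratum. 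Once $S$ is identified, one checks that $D=\pi_S^*A'$ for the forgetful morphism $\pi_S\colon\overline{M}_{0,n}\to\overline{M}_{0,n-|S|}$ and an ample class $A'$, and concludes that $f$ coincides with $\pi_S$ up to an automorphism of the target by uniqueness of the morphism attached to a semiample class.

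The main obstacle is the base-locus analysis that excludes every non-forgetful fibration: a priori $\mathcal{L}$ could have non-linear generic fibre or an unexpected base locus, and one must show that the rigid multiplicity pattern coming from the Kapranov blow-ups, combined with global base-point-freeness of $D$ on all of $\overline{M}_{0,n}$, leaves no room for exotic contractions. The most delicate step is the gluing: the inductive hypothesis supplies a forgotten subset on each boundary divisor separately, and I would need to verify that these local choices are mutually compatible and assemble into a single global subset $S$, while simultaneously controlling the low-$n$ base cases, where the boundary carries too little information to start the induction.
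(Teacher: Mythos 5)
Your overall strategy---transporting the fibration to $\mathbb{P}^{n-3}$ via Kapranov's blow-up and classifying the resulting linear systems through their base loci---is exactly the route of the paper's source \cite{BM2}, and your opening reductions (taking $D=f^*A$, using $f_*\mathcal{O}=\mathcal{O}$ to recover $f$ from the semiample class, rewriting $|D|$ as a system of hypersurfaces with multiplicities along the blown-up spans) are sound. But two of your concrete steps have genuine gaps. First, it is false that in a fixed Kapranov model all forgetful maps are linear projections from reference points: only $\pi_1,\dots,\pi_{n-1}$ are projections from the $p_i$, while the fibers of $\pi_n$ map to the rational normal curves of degree $n-3$ through all of $p_1,\dots,p_{n-1}$ (for $n=5$, $\pi_5$ is the pencil of conics through the four points, not a projection). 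So your target statement, that the base conditions force $\mathcal{L}$ to be a pullback under an iterated linear projection, cannot be the right classification: it omits an entire family of legitimate fibrations. This is precisely why the analysis in \cite{BM1} and \cite{BM2} revolves around the two families of curves---lines through a single $p_i$ and rational normal curves through all $n-1$ points---and degree--multiplicity computations on them; compare the kernel computation reproduced in the proof of Theorem \ref{th:XY}, where both families appear.

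Second, your mechanism for identifying the forgotten set $S$ does not close. That $f$ contracts a vital curve $F$ exactly when $D\cdot F=0$ is fine, but to conclude that this combinatorial datum determines the fibration you would need the contracted extremal face of $\overline{NE}(\overline{M}_{0,n})$ to be generated by vital curves---an instance of Fulton's conjecture, open in general: the fibers of $f$ are covered by contracted curves, not necessarily by contracted vital curves. Likewise, the boundary induction you propose is essentially the Gibney--Keel--Morrison scheme behind Theorem \ref{GKM}, which works for $g\geq 2$, already fails to give clean factorizations for $g=1$ (Pandharipande's example cited in this survey), and is not what is used in genus zero: \cite{BM2} avoids boundary induction altogether and works with the general fiber inside $\mathbb{P}^{n-3}$, where the uniqueness of the rational normal curve through $p_1,\dots,p_{n-1}$ and a general point turns the exclusion of exotic contractions into a finite degree computation, as in the equalities $\deg(\phi_{\mathcal{H}}(L_i))=d-\mult_{p_i}\mathcal{H}$ and $\deg(\phi_{\mathcal{H}}(C))=(n-3)d-\sum_i\mult_{p_i}\mathcal{H}$ of Theorem \ref{th:XY}. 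As written, your proposal defers exactly this exclusion step (your ``main obstacle'') and the gluing of boundary data; those are not loose ends but the entire content of the theorem.
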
   
Furthermore they realized that via this theorem on fibrations they could construct a morphism of groups between $\Aut(\overline{M}_{0,n})$ and $S_{n}$. Indeed if $\f:\overline{M}_{0,n}\to\overline{M}_{0,n}$ is an automorphism and $\pi_i:\overline{M}_{0,n}\to\overline{M}_{0,n}$ is a forgetful morphism by Theorem \ref{bmf} we have the following diagram
\[
  \begin{tikzpicture}[xscale=2.5,yscale=-1.2]
    \node (A0_0) at (0, 0) {$\overline{M}_{0,n}$};
   \node (A0_1) at (1, 0) {$\overline{M}_{0,n}$};
    \node (A1_0) at (0, 1) {$\overline{M}_{0,n-1}$};
    \node (A1_1) at (1, 1) {$\overline{M}_{0,n-1}$};
    \path (A0_0) edge [->]node [auto] {$\scriptstyle{\phi^{-1}}$} (A0_1);
   \path (A1_0) edge [->,]node [auto] {$\scriptstyle{\tilde{\phi}}$} (A1_1);
    \path (A0_1) edge [->]node [auto] {$\scriptstyle{\pi_i}$} (A1_1);
    \path (A0_0) edge [->]node [auto,swap] {$\scriptstyle{\pi_{j_i}}$} (A1_0);
  \end{tikzpicture}
  \]
where $\pi_{j_i}$ is a forgetful map. This allows us to define a surjective morphism of group
\begin{equation}\label{mor}
\begin{array}{cccc}
\chi: & \Aut(\overline{M}_{0,n}) & \longrightarrow & S_{n}\\
 & \phi & \longmapsto & \sigma_{\phi}
\end{array}
\end{equation}
where 
$$
\begin{array}{cccc}
\sigma_{\phi}: & \{1,...,n\} & \longrightarrow & \{1,...,n\}\\
 & i & \longmapsto & j_{i}
\end{array}
$$
Note that in order to have a morphism of groups we have to consider $\phi^{-1}$ instead of $\phi$. Furthermore the factorization of $\pi_i\circ\phi^{-1}$ is unique. Indeed if $\pi_i\circ\phi^{-1}$ admits two factorizations $\tilde{\phi}_{1}\circ\pi_{j}$ and $\tilde{\phi}_{2}\circ\pi_{h}$ then the equality $\tilde{\phi}_{1}\circ\pi_{j}([C,x_{1},...,x_{n}]) = \tilde{\phi}_{2}\circ\pi_{h}([C,x_{1},...,x_{n}])$ for any $[C,x_{1},...,x_{n}]\in\overline{M}_{0,n}$ implies $\tilde{\phi}_{1}([C,y_{1},...,y_{n-1}]) = \tilde{\phi}_{2}([C,y_{1},...,y_{n-1}])$ for any $[C,y_{1},...,y_{n-1}]\in\overline{M}_{0,n-1}$.
Now $\tilde{\phi}_{1} = \tilde{\phi}_{2}$ implies $\tilde{\phi}_{1}\circ\pi_{j} = \tilde{\phi}_{1}\circ\pi_{h}$ and since $\tilde{\phi}_{1}$ is an isomorphism we have $\pi_{j} = \pi_{h}$.\\ 
Once again, via the projective geometry inherited by Kapranov's construction, the kernel of $\chi$ consists of automorphism inducing on
$\P^{n-3}$ a birational self-map that stabilizes lines and
rational normal curves through $(n-1)$ fixed points. This
proves that the kernel is trivial, see the proof of
Theorem \ref{th:XY} for the details, and gives the following positive answer to \cite[Question 4.6]{Fa}.
\begin{Theorem}\cite[Theorem 3]{BM2}
The automorphism group of $\overline{M}_{0,n}$ is isomorphic to $S_{n}$ for any $n\geq 5$.
\end{Theorem}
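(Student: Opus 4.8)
The plan is to prove that the homomorphism $\chi$ of \eqref{mor} is an isomorphism, which by its construction splits into two independent points: surjectivity of $\chi$ and triviality of $\ker\chi$. Surjectivity comes essentially for free from the natural action of $S_n$ on $\overline{M}_{0,n}$ by permutation of the marked points, which furnishes an inclusion $\iota\colon S_n\hookrightarrow\Aut(\overline{M}_{0,n})$. Tracing the definition of $\chi$ through a permutation $\sigma$ — which simply relabels the forgetful maps $\pi_i$ — one checks that $\chi\circ\iota=\mathrm{id}_{S_n}$, so $\chi$ is a split surjection and the whole problem reduces to showing $\ker\chi=\{\mathrm{id}\}$.

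So let $\varphi\in\ker\chi$, i.e. $\sigma_\varphi=\mathrm{id}$. Unwinding the construction of $\chi$, this says that $\varphi$ commutes with every forgetful map, in the sense that $\pi_i\circ\varphi^{-1}=\tilde\varphi\circ\pi_i$ for all $i$; in particular $\varphi$ preserves each of the $n$ fibrations $\pi_i$ without permuting them. The next step is to transport this to projective geometry via Kapranov's Construction \ref{kblusym}, which realizes $\overline{M}_{0,n}$ as an iterated blow-up $X\to\P^{n-3}$ along the cycles spanned by $n-1$ points $p_1,\dots,p_{n-1}$ in general position, in such a way that the forgetful maps are identified with the projections from the $p_i$ together with the structure morphism to $\P^{n-3}$. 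Since $\varphi$ commutes with all of these, it descends to a birational self-map $\psi\colon\P^{n-3}\dashrightarrow\P^{n-3}$ that fixes each point $p_i$, stabilizes the family of lines through the $p_i$, and stabilizes the family of rational normal curves of degree $n-3$ through $p_1,\dots,p_{n-1}$.

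The heart of the argument — and the step I expect to be the main obstacle — is the rigidity statement that such a $\psi$ must be the identity. A priori $\psi$ is a Cremona transformation with base behaviour along the $p_i$, so one cannot simply invoke linearity. The plan is to exploit the decisive numerical coincidence that $n-1=(n-3)+2$ points in general position form a projective frame of $\P^{n-3}$. Preservation of the pencils of lines through the $p_i$ should already force $\psi$ to fix the frame and to stabilize each edge $\overline{p_ip_j}$, reducing $\psi$ along such an edge to a scalar fixing its two endpoints; preservation of the rational normal curves of degree $n-3$ through all $n-1$ points then rigidifies the cross-ratios cut on the edges and kills these residual scalars, while simultaneously ruling out any new base behaviour, so that $\psi=\mathrm{id}$. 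Turning this outline into a clean argument — verifying that the two preserved families genuinely leave no room for a nontrivial $\psi$ — is the technical core, and it is exactly the analysis carried out in the proof of Theorem \ref{th:XY}.

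Finally, once $\psi=\mathrm{id}$ it follows that $\varphi$ is the identity on the dense locus where $X\to\P^{n-3}$ is an isomorphism, hence $\varphi=\mathrm{id}$ on all of $\overline{M}_{0,n}$ by separatedness and density. Therefore $\ker\chi$ is trivial, and together with the surjectivity established above this yields $\Aut(\overline{M}_{0,n})\cong S_n$ for every $n\geq 5$.
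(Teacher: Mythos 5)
Your global architecture is the same as the paper's: reduce to the morphism $\chi$ of \eqref{mor}, get surjectivity from the tautological $S_n$-action (the paper asserts this; your splitting $\chi\circ\iota=\mathrm{id}$ is a fine way to see it), and identify a kernel element with a birational self-map $\psi$ of $\P^{n-3}$ fixing $p_1,\dots,p_{n-1}$ and preserving the family of lines through each $p_i$ and the family of rational normal curves of degree $n-3$ through all $n-1$ points. Up to that point the proposal is sound, modulo a cosmetic slip: the relevant realization is Construction \ref{kblu} (the map $f_n$ induced by the psi-class $\Psi_n$), under which the fibers of $\pi_i$, $i\leq n-1$, are strict transforms of lines through $p_i$ and the fibers of $\pi_n$ are strict transforms of the rational normal curves.

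The genuine gap is the rigidity step, which you leave as an outline whose sketched mechanism would in fact fail. You propose that preservation of the pencils through the $p_i$ forces each edge $\overline{p_ip_j}$ to be stabilized, so that $\psi$ restricts there to a scalar, to be killed by cross-ratios coming from the rational normal curves. But a priori the edges lie in the base or contracted locus of $\psi$: the standard quadratic Cremona transformation of $\P^2$ preserves the pencil of lines through each of its three base points and nevertheless contracts every edge $\overline{p_ip_j}$ to a point, so the implication ``pencils through $p_i$ and $p_j$ preserved $\Rightarrow$ $\overline{p_ip_j}$ stabilized'' is unjustified, and restricting $\psi$ to an edge is not legitimate before base behaviour is excluded --- which is exactly the content you defer with the phrase ``while simultaneously ruling out any new base behaviour'' (and deferring it to ``the analysis in the proof of Theorem \ref{th:XY}'' is circular in a blind proof, since that analysis \emph{is} the missing step). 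Moreover the rational normal curves through all $n-1$ points meet each edge only at the two frame points (a line meets such a curve in at most two points), so they cut no third point on an edge whose cross-ratio could rigidify your residual scalar. What the paper actually does at this juncture is a short degree--multiplicity count that handles base behaviour and linearity at once: writing $|\H|\subseteq|\O_{\P^{n-3}}(d)|$ for the linear system of $\psi$, a \emph{general} line through $p_i$ maps to a line, giving $d-\mult_{p_{i}}\H=1$, and a \emph{general} rational normal curve through all the $p_i$ maps to such a curve, giving $(n-3)d-\sum_{i=1}^{n-1}\mult_{p_{i}}\H=n-3$; substituting $\mult_{p_i}\H=d-1$ yields $d=1$, and a linear automorphism fixing the projective frame $p_1,\dots,p_{n-1}$ (here your numerical coincidence $n-1=(n-3)+2$ does enter) is the identity. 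Note that the paper works only with general members of the two preserved families, precisely sidestepping the edge issue on which your sketch founders; to complete your route you would need to replace the edge/cross-ratio heuristic with some such numerical argument.
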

Although a similar statement in higher genus was expected for many years the problem of computing $\Aut(\overline{M}_{g,n})$ for $g\geq 1$ was not explicitly settled. However \textit{A. Gibney}, \textit{S. Keel} and \textit{I. Morrison} gave an explicit description of the fibrations
$\overline{M}_{g,n}\rightarrow X$ of $\overline{M}_{g,n}$ on a projective variety $X$ in the case $g\geq 1$, providing an analogue of Theorem \ref{bmf}. Let $N$ be the set $\{1,...,n\}$ of the markings. If $S\subset N$ then $S^{c}$ denotes its complement.  
\begin{Theorem}\cite[Theorem 0.9]{GKM}\label{GKM}
Let $D\in \Pic(\overline{M}_{g,n})$ be a nef divisor. 
\begin{enumerate}
\item[-] If $g\geq 2$ either $D$ is the pull-back of a nef divisor on $\overline{M}_{g,n-1}$ via one of the forgetful morphisms or $D$ is big and the exceptional locus of $D$ is contained in $\partial\overline{M}_{g,n}$.
\item[-] If $g = 1$ either $D$ is the tensor product of pull-backs of nef divisors on $\overline{M}_{1,S}$ and $\overline{M}_{1,S^{c}}$ via the tautological projection for some subset $S\subseteq N$ or $D$ is big and the exceptional locus of $D$ is contained in $\partial\overline{M}_{g,n}$.
\end{enumerate}
\end{Theorem}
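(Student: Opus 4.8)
The plan is to study $D$ through the forgetful morphisms $\pi_i\colon\overline{M}_{g,n}\to\overline{M}_{g,n-1}$, which present $\overline{M}_{g,n}$ as the universal curve over $\overline{M}_{g,n-1}$: the fibre of $\pi_i$ over a general point $[C,\dots]$ is a copy of the smooth curve $C$, and these fibres sweep out the interior $M_{g,n}$. I would split on whether $D$ is big. The heart of the matter is the higher genus analogue of Theorem \ref{bmf}: a dominant map with connected fibres that contracts $D$ must be built out of forgetful morphisms. Accordingly I would first record the intersection numbers $D\cdot F$ for $F$ a general fibre of each $\pi_i$ and $D\cdot B$ for the boundary curve classes $B$ occurring as components of degenerate fibres (the rational tails formed when the moving point meets a mark or a node), since these numbers detect which fibration, if any, contracts $D$.

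Suppose first that $D$ is not big. Then $D$ is numerically trivial along the fibres of a fibration $f$ of $\overline{M}_{g,n}$ (its nef, or Iitaka, reduction), and the curves contracted by $f$ have $D$-degree zero. Because $D$ is nef, each such contracted curve lies on the extremal faces carved out by the boundary, and I would argue --- this is the substitute for Theorem \ref{bmf} --- that the only fibrations whose contracted loci are swept out by $D$-trivial curves of this shape are the forgetful morphisms $\pi_i$ (for $g\ge 2$). Once $f=\pi_i$, the vanishing of $D$ on every curve contained in a fibre, together with the spanning of the relative cone of curves of $\pi_i$ by these classes, places $D$ numerically in the image of $\pi_i^{*}$; since numerical and linear equivalence agree on $\overline{M}_{g,n}$ and $\Pic$ is torsion free, $D\equiv\pi_i^{*}D'$ for a genuine class $D'$, which is nef by the projection formula. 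This is the first alternative.

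Suppose instead that $D$ is big. Its exceptional locus is then the null locus $\bigcup\{V:(D|_V)^{\dim V}=0\}$, which by Nakamaye's theorem coincides with the augmented base locus. To see that it avoids the interior I would show that every subvariety $V$ meeting $M_{g,n}$ has $D|_V$ big: projecting $V$ by a suitable composition of the $\pi_i$, the universal-curve fibres through $V$ have positive $D$-degree (otherwise we are back in the pull-back case), and combining this positivity in the fibre directions with the bigness of the descended class on the base yields bigness of $D|_V$. Hence no interior subvariety lies in the null locus and $\Exc(D)\subseteq\partial\overline{M}_{g,n}$.

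In genus one the fibre of $\pi_i$ is an elliptic curve of varying modulus, and its degenerations split off an elliptic tail carrying a subset $S$ of the marks from the component carrying $S^{c}$; the fibrations that can contract a non-big nef $D$ are therefore the tautological projections onto $\overline{M}_{1,S}$ and $\overline{M}_{1,S^{c}}$ rather than a single $\pi_i$, which is why the first alternative takes the product form. The main obstacle is exactly the fibration classification invoked above: one must determine the full relative cone of curves of each candidate fibration and verify that $D$ annihilates \emph{every} component of \emph{every} degenerate fibre, not merely the general fibre, and this requires a careful analysis of the boundary strata (and, in genus one, of the special geometry of $\overline{M}_{1,1}$, the $j$-line). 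Controlling the interior null locus in the big case is the secondary difficulty, since a nef divisor can a priori be positive on a covering family of curves and still degenerate along an interior subvariety, so the descent must genuinely propagate bigness from the base to $V$.
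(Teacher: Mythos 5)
You should first note that this paper contains no proof of the statement: Theorem \ref{GKM} is imported verbatim from \cite[Theorem 0.9]{GKM}, so the benchmark is Gibney--Keel--Morrison's own argument. Their proof runs by direct divisor-class computations: write $D$ in the standard basis $\lambda,\psi_i,\delta_{irr},\delta_S$, intersect with explicit test families (a moving point on a fixed curve, boundary strata), restrict to the boundary and induct on $n$ via the forgetful maps. Crucially, the classification of fibrations of $\overline{M}_{g,n}$ is a \emph{corollary} of their Theorem 0.9 (their Corollaries 0.10--0.12), not an ingredient of its proof. Your plan inverts this logical order: the ``substitute for Theorem \ref{bmf}'' you invoke --- that the only fibrations whose contracted loci are swept by $D$-trivial curves are the forgetful maps (or the tautological projections in genus one) --- is essentially equivalent to the theorem being proved, so as written the argument is circular.

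Beyond the circularity there are three concrete gaps. First, in the non-big case you pass from ``$D$ not big'' to ``$D$ is numerically trivial along the fibres of a fibration $f$''; but the nef reduction is only an almost holomorphic rational map defined up to birational modification, $D$ need not be semiample, and in general no \emph{morphism} contracts the $D$-trivial curves --- on $\overline{M}_{g,n}$ this would require semiampleness of nef divisors, which is unknown. GKM avoid this entirely by testing $D$ against the fibre class $F$ of each $\pi_i$: if $D\cdot F=0$ for some $i$, nefness plus the degeneration of $F$ into effective components forces $D$ to kill the whole relative cone and descend (this portion of your sketch is sound); if $D\cdot F>0$ for all $i$, bigness must still be \emph{proved}, and it does not follow formally. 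Second, in the big case your propagation step --- ``otherwise we are back in the pull-back case'' --- is a non sequitur: positivity of $D$ on the universal-curve fibres through a subvariety $V$ meeting the interior is a per-curve condition and yields no big descended class on any base, precisely because in this case $D$ is not a pull-back at all; GKM instead exhibit an effective decomposition of a multiple of $D$ as a class big away from the boundary plus an effective class supported on $\partial\overline{M}_{g,n}$, which is what bounds $\Exc(D)$. Third, in genus one the fibration classification you want as input is false in the naive form: Pandharipande's example, cited in this very survey \cite[Example A.2]{BM2}, gives fibrations of $\overline{M}_{1,n}$ not factoring through forgetful maps, which is exactly why the genus-one alternative takes the tensor-product shape over subsets $S\subseteq N$ and why no proof can route through a classification of all fibrations.
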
 
An immediate consequence of Theorem \ref{GKM} is that for $g \geq 2$ any fibration of $\overline{M}_{g,n}$ to a projective variety factors through a projection to some $\overline{M}_{g,i}$ with $i < n$, while $\overline{M}_{g}$ has no non-trivial fibrations. Such a clear description of the fibrations of $\overline{M}_{g,n}$ is no longer true for $g = 1$. An explicit counterexample was given by \textit{R. Pandharipande} \cite[Example A.2]{BM2} who also observed that Theorem \ref{GKM} could be the starting point to compute the automorphism groups of $\overline{M}_{g,n}$. In order to compute $\Aut(\overline{M}_{1,n})$ the first author provided a factorization result for a particular type of fibration.
\begin{Lemma}\cite[Lemma 1.3]{Ma}\label{g1}
Let $\phi$ be an automorphism of $\overline{M}_{1,n}$. Any fibration of the type $\pi_{i}\circ\phi$ factorizes through a forgetful morphism $\pi_{j}:\overline{M}_{1,n}\rightarrow\overline{M}_{1,n-1}$.
\end{Lemma}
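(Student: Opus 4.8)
The plan is to feed the fibration $f:=\pi_{i}\circ\phi\colon\overline{M}_{1,n}\to\overline{M}_{1,n-1}$ into the genus-one part of Theorem \ref{GKM}. Since $\phi$ is an automorphism and $\pi_{i}$ is a forgetful morphism, $f$ is a fibration (dominant, connected fibres), and a dimension count ($\dim\overline{M}_{1,n}=n$, $\dim\overline{M}_{1,n-1}=n-1$) shows that its fibres are one-dimensional. Fix an ample divisor $A$ on $\overline{M}_{1,n-1}$ and set $D:=f^{*}A$. Then $D$ is nef, and it is not big because $f$ has positive-dimensional fibres (so $D^{n}=0$). Hence $D$ falls into the first alternative of the $g=1$ case of Theorem \ref{GKM}: there is a subset $S\subseteq N$ and nef divisors $D_{1}$ on $\overline{M}_{1,S}$, $D_{2}$ on $\overline{M}_{1,S^{c}}$ with $D=\rho_{S}^{*}D_{1}+\rho_{S^{c}}^{*}D_{2}$, where $\rho_{S}$ and $\rho_{S^{c}}$ are the forgetful projections and $\tau_{S}=(\rho_{S},\rho_{S^{c}})$ is the associated tautological projection onto the fibre product $Y_{S}=\overline{M}_{1,S}\times_{\overline{M}_{1,1}}\overline{M}_{1,S^{c}}$.

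Next I would compare the two fibrations $f$ and $\tau_{S}$ through their contracted curves. Because $A$ is ample, a curve $C$ is $f$-vertical if and only if $D\cdot C=0$; and every $\tau_{S}$-vertical curve $C$ satisfies $D\cdot C=D_{1}\cdot(\rho_{S})_{*}C+D_{2}\cdot(\rho_{S^{c}})_{*}C=0$, so the $\tau_{S}$-vertical curves are contained in the $f$-vertical ones. If one of the $D_{k}$ vanishes, say $D_{2}=0$, then $D$ is pulled back from $\overline{M}_{1,S}$ via $\rho_{S}$, whose fibres have dimension $|S^{c}|$; since these are contained in the one-dimensional fibres of $f$, we get $|S^{c}|=1$ and $\rho_{S}$ is a forgetful morphism. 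If instead both $D_{1},D_{2}$ are nonzero, then $\tau_{S}$ has relative dimension $n-\dim Y_{S}=1$, so the inclusion of vertical loci together with the equality of dimensions forces the general fibre of $\tau_{S}$ to coincide with the general fibre of $f$. Consequently $f$ factors through $\tau_{S}$, and applying $\phi$ one sees that $\phi$ carries the fibration $\tau_{S}$ to the forgetful fibration $\pi_{i}$; in particular $Y_{S}$ is isomorphic to $\overline{M}_{1,n-1}$ and the fibre class of $\tau_{S}$ lies in the $\Aut(\overline{M}_{1,n})$-orbit of the class of a fibre of a forgetful morphism.

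The main obstacle is to exclude the genuine splittings $2\le|S|\le n-2$, for which $\tau_{S}$ is a non-forgetful elliptic fibration; indeed Pandharipande's example \cite[Example A.2]{BM2} shows that $\overline{M}_{1,n}$ does admit fibrations not factoring through any forgetful map, so the hypothesis that $f$ has the special form $\pi_{i}\circ\phi$ must be used here. Equivalently, since the previous paragraph forces $Y_{S}\cong\overline{M}_{1,n-1}$, it would suffice to show that the fibre product $Y_{S}$ is not isomorphic to $\overline{M}_{1,n-1}$ when $2\le|S|\le n-2$. I would instead argue numerically: an automorphism of $\overline{M}_{1,n}$ preserves the boundary $\partial\overline{M}_{1,n}$ and the tautological classes, permuting the $\psi$-classes and fixing $\lambda$ and $\delta$ up to that permutation, so $[F_{S}]$ and $[F_{i}]$ must pair equally with all $\Aut$-invariant classes. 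A fibre $F_{i}$ of a forgetful morphism moves a single point on a fixed elliptic curve, whereas a fibre $F_{S}$ of the split projection translates the whole $S^{c}$-cluster against the $S$-cluster; comparing their intersection numbers with the $\psi$-classes and the boundary divisors should show that $[F_{S}]$ cannot lie in the $\Aut$-orbit of $[F_{i}]$ unless $\min(|S|,|S^{c}|)=1$. This numerical separation is the delicate point and is where the genus-one combinatorics of tautological intersection numbers enters.

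Once the splitting is excluded we have $\min(|S|,|S^{c}|)=1$, so $\tau_{S}$ is a forgetful morphism $\pi_{j}\colon\overline{M}_{1,n}\to\overline{M}_{1,n-1}$ and $Y_{S}\cong\overline{M}_{1,n-1}$. The factorization of the second paragraph then reads $\pi_{i}\circ\phi=g\circ\pi_{j}$, where $g\colon\overline{M}_{1,n-1}\to\overline{M}_{1,n-1}$ identifies the two bases; since both $f$ and $\pi_{j}$ are fibrations with the same connected fibres, $g$ is an isomorphism, hence an automorphism of $\overline{M}_{1,n-1}$, and the lemma follows.
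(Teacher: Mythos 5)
Your first two paragraphs reproduce the opening of the proof of \cite[Lemma 1.3]{Ma}: pull back an ample divisor to get $D$ nef and not big, invoke the $g=1$ alternative of Theorem \ref{GKM}, and compare vertical curves; your treatment of the degenerate case $D_2=0$ (forcing $|S^c|=1$ via the $|S^c|$-dimensional fibres of $\rho_S$ inside the one-dimensional fibres of $f$) is correct. The genuine gap is exactly where you flag it: the exclusion of the splittings $2\le |S|\le n-2$ is not proved. The numerical route you sketch rests on the assertion that an automorphism of $\overline{M}_{1,n}$ preserves the boundary and permutes the tautological classes $\psi_i$ while fixing $\lambda$ and the $\delta$'s --- but this is not available at this stage and is dangerously close to circular: boundary preservation is a \emph{consequence} of the whole program (in \cite{Ma} it has to be established separately even for $\overline{M}_{2,1}$, and $\Aut(\overline{M}_{1,2})\cong(\mathbb{C}^*)^2$ shows automorphisms of these spaces need not respect the boundary in general). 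Even granting the invariance, you only state that the intersection numbers ``should'' separate $[F_S]$ from $[F_i]$ without computing them, so the delicate point remains open.

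The paper closes this step without any tautological intersection numbers, by a fibre-dimension argument that your own second paragraph already set up. Since $\phi$ is an isomorphism and $\pi_i$ is, up to stabilization, the universal curve, the morphism $f=\pi_i\circ\phi$ is equidimensional with one-dimensional fibres --- \emph{all} of them, not just the general one. On the other hand, if $|S|,|S^c|\ge 2$, the projection $\tau_S$ acquires fibres of dimension at least two over suitable boundary points: take an elliptic core $E$ with a single rational tail $R$ attached at $p\in E$ carrying all the marked points; fixing the $S$-stabilization and the $S^c$-stabilization pins down each cluster on $R$ only up to the affine group fixing the node, so the two-parameter relative position of the $S$-cluster against the $S^c$-cluster moves in a single fibre of $\tau_S$. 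Every curve in this family satisfies $D\cdot C=\rho_S^*D_1\cdot C+\rho_{S^c}^*D_2\cdot C=0$ regardless of $D_1,D_2$, hence (as $A$ is ample and the family is connected) it lies in a single fibre of $f$, contradicting equidimensionality. Thus $\min(|S|,|S^c|)=1$ unconditionally, and the rigidity argument in your last paragraph finishes the proof. In short: you tested the vertical-curve inclusion only at general fibres, where the split fibration is indistinguishable from a forgetful one (both are genus-one fibrations, which is precisely why Pandharipande's example \cite[Example A.2]{BM2} exists); the paper's insight is to test it at special boundary fibres, where the hypothesis that $f$ has the form $\pi_i\circ\phi$ enters through equidimensionality alone.
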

Thanks to Theorem \ref{GKM} and Lemma \ref{g1} in \cite{Ma} the first author constructed the analogue of the morphism \ref{mor} for $g\geq 1$ and proved the following theorem.
\begin{Theorem}\cite[Theorem 3.9]{Ma}\label{mamain}
Let $\overline{\mathcal{M}}_{g,n}$ be the moduli stack parametrizing Deligne-Mumford stable $n$-pointed genus $g$ curves, and let $\overline{M}_{g,n}$ be its coarse moduli space. If $2g-2+n\geq 3$ then 
$$\Aut(\overline{\mathcal{M}}_{g,n})\cong\Aut(\overline{M}_{g,n})\cong S_{n}.$$
For $2g-2+n < 3$ we have the following special behavior: 
\begin{itemize}
\item[-] $\Aut(\overline{M}_{1,2})\cong (\mathbb{C}^{*})^{2}$ while $\Aut(\overline{\mathcal{M}}_{1,2})$ is trivial,
\item[-] $\Aut(\overline{M}_{0,4})\cong\Aut(\overline{\mathcal{M}}_{0,4})\cong\Aut(\overline{M}_{1,1})\cong PGL(2)$ while $\Aut(\overline{\mathcal{M}}_{1,1})\cong \mathbb{C}^{*}$,
\item[-] $\Aut(\overline{M}_{g})$ and $\Aut(\overline{\mathcal{M}}_{g})$ are trivial for any $g\geq 2$, \cite[Corollary
0.12]{GKM}.  
\end{itemize}
\end{Theorem}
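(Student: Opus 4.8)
The plan is to reduce the computation of $\Aut(\overline{M}_{g,n})$, for $2g-2+n\geq 3$, to the construction and analysis of the homomorphism $\chi\colon\Aut(\overline{M}_{g,n})\to S_n$ of \eqref{mor}, suitably adapted to genus $g$, and then to transfer the conclusion from the coarse space to the stack. Given $\phi\in\Aut(\overline{M}_{g,n})$ and a forgetful morphism $\pi_i$, the composition $\pi_i\circ\phi^{-1}$ is a fibration onto $\overline{M}_{g,n-1}$, and the first task is to show that it factors through some forgetful morphism $\pi_{j_i}$. For $g\geq 2$ this follows at once from Theorem \ref{GKM}: the pull-back of an ample class from $\overline{M}_{g,n-1}$ is a nef, non-big divisor on $\overline{M}_{g,n}$, which by the theorem must be the pull-back of a nef divisor via a forgetful morphism, forcing the factorization. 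For $g=1$ Theorem \ref{GKM} only guarantees a factorization through a product of tautological projections, so here I would invoke Lemma \ref{g1}, which is tailored precisely to fibrations of the form $\pi_i\circ\phi$ and yields factorization through a single forgetful map. This produces the assignment $i\mapsto j_i$, hence $\chi$; its well-definedness and multiplicativity follow from uniqueness of the factorization, argued verbatim as in the genus zero discussion preceding \eqref{mor}.

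Surjectivity of $\chi$ is immediate, since the natural $S_n$-action permuting the markings splits it, so the heart of the proof is the triviality of $\ker\chi$, which I would establish by induction on $n$. If $\phi\in\ker\chi$, then for each $i$ the induced automorphism $\tilde\phi_i\in\Aut(\overline{M}_{g,n-1})$ satisfies $\pi_i\circ\phi=\tilde\phi_i\circ\pi_i$; by the inductive hypothesis $\tilde\phi_i\in S_{n-1}$, and since $\phi$ induces the identity permutation we get $\tilde\phi_i=\mathrm{id}$, whence $\pi_i\circ\phi=\pi_i$ for every $i$. When $n\geq 2$ the product map $\overline{M}_{g,n}\to\prod_{i=1}^{n}\overline{M}_{g,n-1}$ given by the forgetful morphisms is generically injective — from the images of a point under $\pi_1$ and $\pi_2$ one already recovers the curve together with all of its marked points — so an automorphism commuting with every $\pi_i$ is the identity on a dense open subset, hence $\phi=\mathrm{id}$. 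The case $n=1$, namely $\overline{M}_{g,1}\to\overline{M}_g$, must be treated separately: there $\phi$ is a fibrewise automorphism of the universal curve, and for $g\geq 3$ the generic fibre has no non-trivial automorphism, while for $g=2$ the fibrewise hyperelliptic involution already acts trivially on the coarse space, so again $\phi=\mathrm{id}$.

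The induction must be anchored with care, because lowering the number of points can land in an exceptional space: the steps $\overline{M}_{0,5}\to\overline{M}_{0,4}$ and $\overline{M}_{1,3}\to\overline{M}_{1,2}$ fall outside the regime $\Aut\cong S_{n-1}$, since $\Aut(\overline{M}_{0,4})\cong PGL(2)$ and $\Aut(\overline{M}_{1,2})\cong(\mathbb{C}^{*})^{2}$. For $g\geq 2$ no such obstruction arises, as the reductions terminate at $\overline{M}_{g,1}$ and then at $\overline{M}_g$, whose automorphism group is trivial $=S_0$ by \cite[Corollary 0.12]{GKM}. The genus zero base cases are supplied by the Bruno--Mella theorem $\Aut(\overline{M}_{0,n})\cong S_n$ for $n\geq 5$, resting on Theorem \ref{bmf} and Kapranov's projective model, whereas $\overline{M}_{1,3}$ — sitting just above the exceptional $\overline{M}_{1,2}$ — cannot be reached by a further forgetful reduction and has to be analysed directly from its explicit geometry. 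With the base cases in place the induction closes and gives $\Aut(\overline{M}_{g,n})\cong S_n$ for all $2g-2+n\geq 3$; the values $2g-2+n<3$ are then a finite list of explicit computations, yielding $PGL(2)$ on the $j$-lines $\overline{M}_{1,1}\cong\mathbb{P}^1$ and $\overline{M}_{0,4}\cong\mathbb{P}^1$, and $(\mathbb{C}^{*})^{2}$ on $\overline{M}_{1,2}$.

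Finally, for the stack statement I would compare $\Aut(\overline{\mathcal{M}}_{g,n})$ with $\Aut(\overline{M}_{g,n})$ via the homomorphism induced by passage to coarse moduli. Surjectivity holds because the $S_n$-action lifts to the stack, and injectivity follows since for $2g-2+n\geq 3$ the generic object of $\overline{\mathcal{M}}_{g,n}$ has trivial automorphism group, forcing a stack automorphism inducing the identity on the coarse space to be the identity $2$-morphism. This identification breaks exactly at the exceptional values, producing the discrepancies recorded in the statement — most strikingly $\Aut(\overline{\mathcal{M}}_{1,2})$ trivial against $\Aut(\overline{M}_{1,2})\cong(\mathbb{C}^{*})^{2}$, and $\Aut(\overline{\mathcal{M}}_{1,1})\cong\mathbb{C}^{*}$ against $\Aut(\overline{M}_{1,1})\cong PGL(2)$, both reflecting generic $j$-line automorphisms visible on the coarse level but obstructed by the rigidity of the universal curve on the stack. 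The main obstacle I anticipate is the genus one analysis: Theorem \ref{GKM} is genuinely weaker for $g=1$, so the entire construction of $\chi$ hinges on Lemma \ref{g1}, and the self-contained treatment of the base case $\overline{M}_{1,3}$ is the one step that the inductive machine does not supply for free.
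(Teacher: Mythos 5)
Your overall architecture is the paper's: build $\chi$ from Theorem \ref{GKM} for $g\geq 2$ and Lemma \ref{g1} for $g=1$, get surjectivity from the split $S_n$-action, kill the kernel by induction on $n$, and transfer to the stack via the automorphism-freeness of the generic object. But there is a genuine gap exactly where the paper warns the naive argument ``completely fails'': the anchor $\overline{M}_{2,1}$. Your claim that ``the fibrewise hyperelliptic involution already acts trivially on the coarse space, so again $\phi=\mathrm{id}$'' is a non sequitur. The triviality of $\iota$ on coarse points cuts the other way: it means the generic fiber of $\pi\colon\overline{M}_{2,1}\to\overline{M}_{2}$ is $C/\iota\cong\mathbb{P}^{1}$, so a $\phi\in\Aut(\overline{M}_{2,1})$ with $\pi\circ\phi=\pi$ is a fiberwise automorphism of a $\mathbb{P}^{1}$-fibration, and $\Aut(\mathbb{P}^{1})=PGL(2)$ leaves you with no rigidity at all --- the very rigidity your $g\geq 3$ argument exploits (generic fiber $=C$ with $\Aut(C)$ trivial) is destroyed, not restored, by the involution. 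The paper's proof of this case is genuinely different: one first shows that every automorphism of $\overline{M}_{2,1}$ preserves the boundary, and then applies Royden's Theorem \ref{thRo} to the interior (here $2g-2+n=3$ and one marked point is automatically ``unordered''), concluding triviality. Nothing in your proposal substitutes for this step.

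The same hyperelliptic phenomenon also breaks your induction step in low genus: the product of forgetful maps is \emph{not} generically injective there. For $g=2$, $n=2$, the points $[C,x_{1},x_{2}]$ and $[C,x_{1},\iota(x_{2})]$ have the same images under both $\pi_{1}$ and $\pi_{2}$ (since $[C,\iota(x_{2})]=[C,x_{2}]$), yet are distinct for generic non-Weierstrass markings; similarly, using only $\pi_{1},\pi_{2}$ fails for $g=1$ because of translations and the elliptic involution. So ``recovering the curve with all its marked points from $\pi_{1}$ and $\pi_{2}$'' is only valid for $g\geq 3$; for $g=2$ one needs $n\geq 3$ together with all forgetful maps and genericity relative to the Weierstrass locus, and the cases $(g,n)=(2,2)$ and $(2,1)$ still require the boundary-plus-Royden argument above. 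Finally, you correctly identify that the genus-one induction cannot be anchored by a formal reduction --- $\Aut(\overline{M}_{1,2})\cong(\mathbb{C}^{*})^{2}\neq S_{2}$ --- but you leave the base case open; in the paper this is resolved through the explicit toric description of $\overline{M}_{1,2}$ as a weighted blow-up of $\mathbb{P}(1,2,3)$ at $[1:0:0]$, from which $(\mathbb{C}^{*})^{2}$ is computed and the step to $\overline{M}_{1,3}$ is controlled. As it stands, your proposal proves the theorem only for $g=0$ (by citing Bruno--Mella) and $g\geq 3$; the hyperelliptic and elliptic anchors are missing.
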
 
The proof of Theorem \ref{mamain} is divided into two parts: the cases $2g-2+n\geq 3$ and $2g-2+n < 3$. When $2g-2+n\geq 3$ the proof uses extensively Theorem \ref{GKM}. This result, combined with the triviality of the automorphism group of the generic curve of genus $g\geq 3$, leads the first author to prove that the automorphism group of $\overline{M}_{g,1}$ is trivial for any $g\geq 3$. However any genus two curve is hyperelliptic and has a non-trivial automorphism: the hyperelliptic involution. Therefore the argument used in the case $g\geq 3$ completely fails and a different strategy is needed: the first author first proved that any automorphism of $\overline{M}_{2,1}$ preserves the boundary and then applied Theorem \ref{thRo} to conclude that $\Aut(\overline{M}_{2,1})$ is trivial. Finally he tackled the general case by induction on $n$.\\
The case $g = 1, n = 2$ requires an explicit description of the moduli space $\overline{M}_{1,2}$. In \cite[Theorem 2.3]{Ma} the first author proved that $\overline{M}_{1,2}$ is isomorphic to a weighted blow-up of $\mathbb{P}(1,2,3)$ in the point $[1:0:0]$. In particular $\overline{M}_{1,2}$ is toric. From this he derived that $\Aut(\overline{M}_{1,2})$ is isomorphic to $(\mathbb{C}^{*})^{2}$ \cite[Proposition 3.8]{Ma}.
\subsubsection*{Automorphisms of Hassett's moduli spaces}
Many of the techniques used to deal with the automorphisms of $\overline{M}_{g,n}$ apply also to moduli spaces of weighted pointed curves. These are the compactifications of $M_{g,n}$ introduced by \textit{B. Hassett} in \cite{Ha}. Hassett constructed new compactifications $\overline{\mathcal{M}}_{g,A[n]}$ of the moduli stack $\mathcal{M}_{g,n}$ and $\overline{M}_{g,A[n]}$ for the coarse moduli space by assigning rational weights $A = (a_{1},...,a_{n})$, $0< a_{i}\leq 1$ to the markings.\\  
In \cite[Section 2.1.1]{Ha} \textit{B. Hassett} considers a natural variation on the moduli problem of weighted pointed stable curves by allowing some of the marked points to have weight zero. We introduce these more general spaces because we will compute their automorphisms in Section \ref{zw}. Consider the data $(g,\tilde{A}):= (g,a_1,...,a_n)$ such that $a_{i}\in\mathbb{Q}$, $0\leq a_i\leq 1$ for any $i = 1,...,n$, and 
$$2g-2+\sum_{i=1}^{n}a_{i} > 0.$$
\begin{Definition}\label{defha}
A family of nodal curves with marked points $\pi:(C,s_{1},...,s_{n})\rightarrow S$ is stable of type $(g,\tilde{A})$ if
\begin{itemize}
\item[-] the sections $s_{1},...,s_{n}$ with positive weights lie in the smooth locus of $\pi$, and for any subset $\{s_{i_{1}},...,s_{i_{r}}\}$ with non-empty intersection we have $a_{i_{1}} +...+ a_{i_{r}} \leq 1$,
\item[-] $K_{\pi}+\sum_{i=1}^{n}a_{i}s_{i}$ is $\pi$-relatively ample. 
\end{itemize}
\end{Definition}
There exists a connected Deligne-Mumford stack $\overline{\mathcal{M}}_{g,\tilde{A}[n]}$ representing the moduli problem of pointed stable curves of type $(g,\tilde{A})$. The corresponding coarse moduli scheme $\overline{M}_{g,\tilde{A}[n]}$ is projective over $\mathbb{Z}$.\\ 
If $a_i>0$ for any $i=1,...,n$, by \cite[Theorem 3.8]{Ha} a weighted pointed stable curve admits no infinitesimal automorphisms and its infinitesimal deformation space is unobstructed of dimension $3g-3+n$. Then $\overline{\mathcal{M}}_{g,A[n]}$ is a smooth Deligne-Mumford stack of dimension $3g-3+n$. However, when some of the marked points are allowed to have weight zero even the stack $\overline{\mathcal{M}}_{g,\tilde{A}[n]}$ may be singular, see \cite[Section 2.1.1.]{Ha}.\\
Following Hassett we denote by $A$ the subset of $\tilde{A}$ containing all the positive weights so that $|\tilde{A}| = |A|+N$ where $N$ is the number of zero weights. Note that an $\tilde{A}$-stable curve is an $A$-stable curve with $N$ additional arbitrary marked points. Furthermore the points with weight zero are allowed to be singular points. So at the level of the stacks
$$\overline{\mathcal{M}}_{g,\tilde{A}[n]}\cong \underbrace{\mathcal{C}_{g,A[n]}\times_{\overline{\mathcal{M}}_{g,A[n]}}...\times_{\overline{\mathcal{M}}_{g,A[n]}}\mathcal{C}_{g,A[n]}}_{N \; \rm times}$$
where $\mathcal{C}_{g,A[n]}$ is the universal curve over $\overline{\mathcal{M}}_{g,A[n]}$.\\
In the more general setting of zero weights we still have natural morphisms between Hassett's spaces. Fixed $g,n,$ consider two collections of weight data $\tilde{A}[n],\tilde{B}[n]$ such that $a_i\geq b_i\geq 0$ for any $i = 1,...,n$. Then there exists a birational \textit{reduction morphism}
$$\rho_{\tilde{B}[n],\tilde{A}[n]}:\overline{M}_{g,\tilde{A}[n]}\rightarrow\overline{M}_{g,\tilde{B}[n]}$$
associating to a curve $[C,s_1,...,s_n]\in\overline{M}_{g,\tilde{A}[n]}$ the curve $\rho_{\tilde{B}[n],\tilde{A}[n]}([C,s_1,...,s_n])$ obtained by collapsing components of $C$ along which $K_C+b_1s_1+...+b_ns_n$ fails to be ample.\\
Furthermore, for any $g$ consider a collection of weight data $\tilde{A}[n]=(a_1,...,a_n)$ and a subset $\tilde{A}[r]:=(a_{i_{1}},...,a_{i_{r}})\subset \tilde{A}[n]$ such that $2g-2+a_{i_{1}}+...+a_{i_{r}}>0$. Then there exists a \textit{forgetful morphism} 
$$\pi_{\tilde{A}[n],\tilde{A}[r]}:\overline{M}_{g,\tilde{A}[n]}\rightarrow\overline{M}_{g,\tilde{A}[r]}$$
associating to a curve $[C,s_1,...,s_n]\in\overline{M}_{g,\tilde{A}[n]}$ the curve $\pi_{\tilde{A}[n],\tilde{A}[r]}([C,s_1,...,s_n])$ obtained by collapsing components of $C$ along which $K_C+a_{i_{1}}s_{i_{1}}+...+a_{i_{r}}s_{i_{r}}$ fails to be ample. For the details see \cite[Section 4]{Ha}.\\
Some of the spaces $\overline{M}_{0,A[n]}$ appear as intermediate steps of the Kapranov's blow-up construction of $\overline{M}_{0,n}$ \cite[Section 6.1]{Ha}. In higher genus $\overline{M}_{g,A[n]}$ may be related to the Log minimal model program on $\overline{M}_{g,n}$. Let us recall the Kapranov's construction.
\begin{Construction}\cite{Ka}\label{kblu}
Fixed $(n-1)$-points $p_{1},...,p_{n-1}\in\mathbb{P}^{n-3}$ in linear general position:
\begin{itemize}
\item[(1)] Blow-up the points $p_{1},...,p_{n-2}$, then the lines $\left\langle p_{i},p_{j}\right\rangle$ for $i,j = 1,...,n-2$,..., the $(n-5)$-planes spanned by $n-4$ of these points.  
\item[(2)] Blow-up $p_{n-1}$, the lines spanned by pairs of points including $p_{n-1}$ but not $p_{n-2}$,..., the $(n-5)$-planes spanned by $n-4$ of these points including $p_{n-1}$ but not $p_{n-2}$.\\
\vdots
\item[($r$)] Blow-up the linear spaces spanned by subsets $\{p_{n-1},p_{n-2},...,p_{n-r+1}\}$ so that the order of the blow-ups is compatible with the partial order on the subsets given by inclusion, the $(r-1)$-planes spanned by $r$ of these points including $p_{n-1},p_{n-2},...,p_{n-r+1}$ but not $p_{n-r}$,..., the $(n-5)$-planes spanned by $n-4$ of these points including $p_{n-1},p_{n-2},...,p_{n-r+1}$ but not $p_{n-r}$.\\
\vdots
\item[($n-3$)] Blow-up the linear spaces spanned by subsets $\{p_{n-1},p_{n-2},...,p_{4}\}$.
\end{itemize}
The composition of these blow-ups is the morphism $f_{n}:\overline{M}_{0,n}\rightarrow\mathbb{P}^{n-3}$ induced by the psi-class $\Psi_{n}$.
\end{Construction}
In \cite[Section 6.1]{Ha} Hassett interprets the intermediate steps of Construction \ref{kblu} as moduli spaces of weighted rational curves. Consider the weight data 
$$A_{r,s}[n]:= (\underbrace{1/(n-r-1),...,1/(n-r-1)}_{(n-r-1)\; \rm times}, s/(n-r-1), \underbrace{1,...,1}_{r\; \rm times})$$ 
for $r = 1,...,n-3$ and $s = 1,...,n-r-2$. Then the variety obtained at the $r$-th step once we finish blowing-up the subspaces spanned by subsets $S$ with $|S|\leq s+r-2$ is isomorphic to $\overline{M}_{0,A_{r,s}[n]}$.\\
To help the reader getting acquainted with Construction \ref{kblu} we develop in detail the simplest case in genus zero. 
\begin{Example} Let $n = 5$, and fix $p_{1},...,p_{4}\in\mathbb{P}^{2}$ points in general position. The Kapranov's map $f_5$ is as follows: blow-up $p_{1},p_{2},p_{3}$ and then blow-up $p_{4}$.\\
At the step $r = 1, s=1$ we get $\overline{M}_{0,A_{1,1}[n]} = \mathbb{P}^{2}$ and the weights are
$$A_{1,1}[5]:= (1/3,1/3,1/3,1/3,1).$$
While for $r = 2, s = 1$ we get $\overline{M}_{0,A_{2,1}[n]}\cong\overline{M}_{0,5}$, indeed in this case the weight data are
$$A_{2,1}[5]:= (1/2,1/2,1/2,1,1).$$
Note that as long as all the weights are strictly greater than $1/3$, the Hassett's space is isomorphic to $\overline{M}_{0,n}$ because at most two points can collide, so the only components that get contracted are rational tail components with exactly two marked points. Since these have exactly three special points they have no moduli and contracting them does not affect the coarse moduli space even though it does change the universal curve, see also \cite[Corollary 4.7]{Ha}. In our case $\overline{M}_{0,A_{2,1}[5]}\cong\overline{M}_{0,5}$.\\ 
We have only one intermediate step, namely $r = 1, s = 2$. The moduli space $\overline{M}_{0,A_{1,2}[5]}$ parametrizes weighted pointed curves with weight data 
$$A_{1,2}[5]:= (1/3,1/3,1/3,2/3,1).$$
This means that the point $p_{5}$ is allowed to collide with $p_{1},p_{2},p_{3}$ but not with $p_{4}$ which has not yet been blown-up. The Kapranov's map $f_{5}:\overline{M}_{0,5}\rightarrow\mathbb{P}^{2}$ factorizes as
  \[
  \begin{tikzpicture}[xscale=2.0,yscale=-1.5]
    \node (A0_0) at (0, 0) {$\overline{M}_{0,5}\cong\overline{M}_{0,A_{2,1}[5]}$};
    \node (A1_1) at (1, 1) {$\overline{M}_{0,A_{1,2}[5]}$};
    \node (A2_0) at (0, 2) {$\mathbb{P}^{2}\cong\overline{M}_{0,A_{1,1}[5]}$};
    \path (A1_1) edge [->]node [auto] {$\scriptstyle{\rho_{2}}$} (A2_0);
    \path (A0_0) edge [->]node [auto,swap] {$\scriptstyle{f_{5}}$} (A2_0);
    \path (A0_0) edge [->]node [auto] {$\scriptstyle{\rho_ {1}}$} (A1_1);
  \end{tikzpicture}
  \]
where $\rho_{1},\rho_{2}$ are the corresponding reduction morphisms. Let us analyze these two morphisms.
\begin{itemize}
\item[-] Given $(C,s_{1},...,s_{5})\in \overline{M}_{0,A_{2,1}[5]}$ the curve $\rho_{1}(C,s_{1},...,s_{5})$ is obtained by collapsing components of $C$ along which $K_{C}+\frac{1}{3}s_{1}+\frac{1}{3}s_{2}+\frac{1}{3}s_{3}+\frac{2}{3}s_{4}+s_{5}$ fails to be ample. So it contracts the $2$-pointed components of the following curves:
$$\includegraphics[scale=0.3]{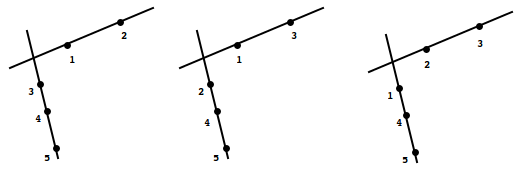}$$
along which $K_{C}+\frac{1}{3}s_{1}+\frac{1}{3}s_{2}+\frac{1}{3}s_{3}+\frac{2}{3}s_{4}+s_{5}$ is anti-ample, and the $2$-pointed components of the following curves:
$$\includegraphics[scale=0.3]{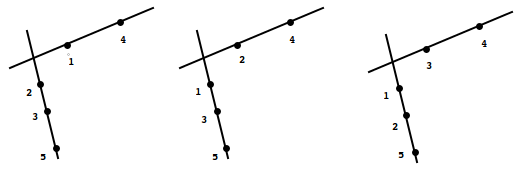}$$
along which $K_{C}+\frac{1}{3}s_{1}+\frac{1}{3}s_{2}+\frac{1}{3}s_{3}+\frac{2}{3}s_{4}+s_{5}$ is nef but not ample. However all the contracted components have exactly three special points, and therefore they do not have moduli. This affects only the universal curve but not the coarse moduli space.\\
Finally $K_{C}+\frac{1}{3}s_{1}+\frac{1}{3}s_{2}+\frac{1}{3}s_{3}+\frac{2}{3}s_{4}+s_{5}$ is nef but not ample on the $3$-pointed component of the curve
$$\includegraphics[scale=0.3]{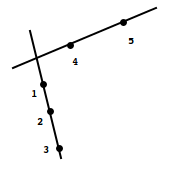}$$
In fact this corresponds to the contraction of the divisor $E_{5,4} = f_{5}^{-1}(p_{4})$.
\item[-] The morphism $\rho_{2}$ contracts the $3$-pointed components of the curves
$$\includegraphics[scale=0.3]{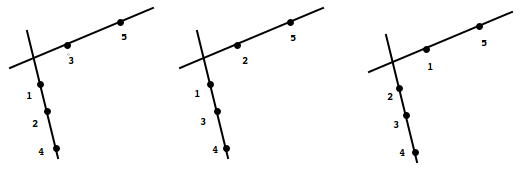}$$
along which $K_{C}+\frac{1}{3}s_{1}+\frac{1}{3}s_{2}+\frac{1}{3}s_{3}+\frac{1}{3}s_{4}+s_{5}$ has degree zero. This corresponds to the contractions of the divisors $E_{5,3} = f_{5}^{-1}(p_{3})$, $E_{5,2} = f_{5}^{-1}(p_{2})$ and $E_{5,1} = f_{5}^{-1}(p_{1})$.
\end{itemize}
\end{Example}
In the more general setting of Hassett's spaces not all forgetful maps are well defined as morphisms. However in \cite[Theorem
2.6, Proposition 2.7]{MM} the authors manage to derive a weighted version of Theorem \ref{bmf} and \ref{GKM} and thanks to these they construct a morphism of groups
\begin{equation}\label{morha}
\begin{array}{cccc}
\chi: & \Aut(\overline{M}_{g,A[n]}) & \longrightarrow & S_{r}\\
 & \phi & \longmapsto & \sigma_{\phi}
\end{array}
\end{equation}
where 
$$
\begin{array}{cccc}
\sigma_{\phi}: & \{1,...,r\} & \longrightarrow & \{1,...,r\}\\
 & i & \longmapsto & j_{i}
\end{array}
$$
and $r$ is the number of well defined forgetful maps on $\overline{M}_{g,A[n]}$. We would like to stress that in the genus zero case the authors considered only the Hassett's spaces factorizing Kapranov in the sense of the following definition.
\begin{Definition}\cite[Definition 2.1]{MM}\label{dfactkap}
We say that a Hassett's moduli space $\overline{M}_{0,A[n]}$ \textit{factors Kapranov} if there exists a morphism $\rho_2$ that makes the following diagram commutative
 \[
  \begin{tikzpicture}[xscale=3.5,yscale=-1.9]
    \node (A0_0) at (0, 0) {$\overline{M}_{0,n}$};
    \node (A1_0) at (0, 1) {$\overline{M}_{0,A[n]}$};
    \node (A1_1) at (1, 1) {$\mathbb{P}^{n-3}$};
    \path (A0_0) edge [->]node [auto,swap] {$\scriptstyle{\rho_{1}}$} (A1_0);
    \path (A1_0) edge [->]node [auto] {$\scriptstyle{\rho_{2}}$} (A1_1);
    \path (A0_0) edge [->]node [auto] {$\scriptstyle{f_{i}}$} (A1_1);
  \end{tikzpicture}
  \]
where $f_{i}$ is a Kapranov's map and $\rho_1$ is a reduction. We call such a  $\rho_2$  a \textit{Kapranov's factorization}.
\end{Definition}
However this is enough to compute the automorphisms of all intermediate steps of Construction \ref{kblu}.
\begin{Theorem}\label{authaka}
The automorphism groups of the Hassett's spaces appearing in Construction \ref{kblu} are given by
\begin{itemize}
\item[-] $\Aut(\overline{M}_{0,A_{r,s}[n]})\cong (\mathbb{C}^{*})^{n-3}\times S_{n-2},\: if \: r = 1,\: 1<s < n-3,$
\item[-] $\Aut(\overline{M}_{0,A_{r,s}[n]})\cong (\mathbb{C}^{*})^{n-3}\times S_{n-2}\times S_{2},\: if \: r = 1,\: s = n-3,$
\item[-] $\Aut(\overline{M}_{0,A_{r,s}[n]})\cong S_{n},\: if \: r \geq 2.$
\end{itemize}
\end{Theorem}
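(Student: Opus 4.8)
The plan is to compute the three cases by combining the group morphism $\chi$ of (\ref{morha}) with a direct analysis of which forgetful maps are well defined on $\overline{M}_{0,A_{r,s}[n]}$, and then to identify the kernel of $\chi$ using the Kapranov factorization geometry. First I would read off the weights $A_{r,s}[n] = (1/(n-r-1),\ldots,1/(n-r-1),\, s/(n-r-1),\, 1,\ldots,1)$ and determine, for each of the three weight regimes, how many forgetful morphisms $\pi_{\tilde{A}[n],\tilde{A}[r]}$ to one-point-forgotten Hassett spaces are actually morphisms. The combinatorial criterion is that forgetting a marked point $s_k$ is well defined precisely when the remaining weight data still satisfy Hassett's stability inequalities; this forces the target $S_\bullet$ in (\ref{morha}) to be a product of symmetric groups on the blocks of markings carrying equal weight that may be mutually permuted. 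Concretely I expect the $(n-2)$ points of small weight $1/(n-r-1)$ and $s/(n-r-1)$ to be freely permutable among the first $n-r-1$ of them (giving the $S_{n-2}$ factor when $r=1$), the $r$ points of weight $1$ to contribute nothing extra when $r\geq 2$ collapses everything back to $\overline{M}_{0,n}$, and the special weight $s/(n-r-1)$ to pair with another point only in the boundary regime $s=n-3$, producing the extra $S_2$.

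Next I would establish the surjectivity and compute the kernel of $\chi$. Surjectivity onto the relevant symmetric group follows because $S_n$ acts on $\overline{M}_{0,n}$ and descends to $\overline{M}_{0,A_{r,s}[n]}$ exactly through the permutations preserving the weight data, and these realize every permutation of the equal-weight blocks. For the kernel, I would argue exactly as in the proof of Theorem \ref{th:XY} referenced in the excerpt: an automorphism in $\ker\chi$ commutes with all well-defined forgetful maps, hence descends through a Kapranov factorization (Definition \ref{dfactkap}) to a birational self-map of $\P^{n-3}$ that stabilizes the lines and rational normal curves through the $n-1$ fixed points $p_1,\ldots,p_{n-1}$. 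The geometry of these linear systems pins down the induced map up to the torus $(\mathbb{C}^*)^{n-3}$ acting on $\P^{n-3}$ fixing the coordinate points, which accounts for the $(\mathbb{C}^*)^{n-3}$ factor appearing precisely when $r=1$ (so that $\overline{M}_{0,A_{r,s}[n]}$ genuinely factors Kapranov onto $\P^{n-3}$ and is not isomorphic to $\overline{M}_{0,n}$). When $r\geq 2$ the space is isomorphic to $\overline{M}_{0,n}$ itself, the torus contracts to the identity, and the kernel is trivial, giving $\Aut\cong S_n$ by the genus-zero case of Theorem \ref{mamain}.

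I would then assemble the three statements. For $r=1,\ 1<s<n-3$ the semidirect product degenerates to a direct product $(\mathbb{C}^*)^{n-3}\times S_{n-2}$ because the torus is normalized by the block permutations and the special point of weight $s/(n-r-1)$ is rigidly distinguished, so no transposition swaps it with the last marking. For $r=1,\ s=n-3$ the special weight reaches the threshold allowing one additional collision, which makes a further forgetful map well defined and enlarges the permutation part by the factor $S_2$, yielding $(\mathbb{C}^*)^{n-3}\times S_{n-2}\times S_2$. For $r\geq 2$ the reduction to $\overline{M}_{0,n}$ gives $S_n$ directly.

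The main obstacle I anticipate is the kernel computation and the precise bookkeeping of the torus factor: one must verify that the birational self-map of $\P^{n-3}$ fixing the $n-1$ points and stabilizing the relevant lines and rational normal curves is forced to lie in the standard torus, and then check that this torus action actually lifts to a biregular (not merely birational) automorphism of the intermediate Hassett space $\overline{M}_{0,A_{r,s}[n]}$ — i.e.\ that the toric structure surviving from $\P^{n-3}$ through the partial Kapranov blow-ups is compatible with the reduction morphism $\rho_2$. Establishing that the semidirect product is in fact direct, and that the permutation group is exactly $S_{n-2}$ (respectively $S_{n-2}\times S_2$) rather than some larger or smaller subgroup, requires careful tracking of which markings can be interchanged without violating the weight inequalities of Definition \ref{defha}, and this combinatorial determination of well-defined forgetful maps is where the three cases genuinely diverge.
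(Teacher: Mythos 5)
Your overall architecture --- the morphism $\chi$ of (\ref{morha}), image computed from the weight combinatorics, kernel pinned down through the Kapranov factorization --- is indeed the route the paper takes (the theorem is imported from \cite{MM}, and the mechanism is the one reproduced in the proof of Theorem \ref{th:XY}). But two of your steps fail concretely. First, the case $r\geq 2$: it is \emph{false} that $\overline{M}_{0,A_{r,s}[n]}\cong\overline{M}_{0,n}$ for all $r\geq 2$; this holds only at the final step $r=n-3$ (and in the low case $n=5$, where $r\geq 2$ means $r=n-3$). Take $n=6$, $r=2$, $s=1$, so $A_{2,1}[6]=(1/3,1/3,1/3,1/3,1,1)$: the reduction $\overline{M}_{0,6}\rightarrow\overline{M}_{0,A_{2,1}[6]}$ contracts the four boundary divisors $D_I$ with $I\subset\{1,2,3,4\}$ a triple (on such a tail $K_C+\sum a_is_i$ has degree $-2+1+3\cdot\frac{1}{3}=0$), and the target is $\mathbb{P}^3$ blown up in five points and only six of the ten lines, with Picard number $12$ against $\rho(\overline{M}_{0,6})=16$. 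Hence you cannot conclude by quoting Theorem \ref{mamain}. Note moreover that for $2\leq r\leq n-4$ the light--heavy transpositions are not admissible in the sense of Definition \ref{atrans} (three light weights sum to $3/(n-r-1)\leq 1$, while the heavy point plus two lights sums above $1$), so your ``weight-preserving block permutations'' argument cannot produce $S_n$ either: the surjectivity of $\chi$ onto $S_n$ is precisely the nontrivial input the paper takes from Proposition \ref{fac1} and \cite[Theorem 2.6]{MM}, and the kernel for $r\geq 2$ is killed by the degree computation of Theorem \ref{th:XY}: the relations $d-\mult_{p_i}\H=1$ on the lines and $(n-3)d-\sum_{i=1}^{n-1}\mult_{p_i}\H=n-3$ on the rational normal curves force $d=1$, and a projectivity fixing the $n-1$ general points $p_1,\ldots,p_{n-1}$ --- available exactly because $p_{n-1}$ has been blown up from step $r=2$ onwards --- is the identity.

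Second, your kernel analysis for $r=1$ is internally inconsistent: you run the argument with the lines and rational normal curves through all $n-1$ points, but that configuration pins the induced map to the identity, not to a torus. The torus survives for $r=1$ precisely because only $p_1,\ldots,p_{n-2}$ and their linear spans have been blown up: $n-2$ points of $\mathbb{P}^{n-3}$ in general position are projectively the coordinate points, a kernel element therefore induces a linear map fixing the coordinate points, and such maps form exactly $(\mathbb{C}^{*})^{n-3}$ (consistently, the $r=1$ spaces are toric, the Losev--Manin space for $s=n-3$). Relatedly, your mechanism for the extra $S_2$ at $s=n-3$ is wrong: the set of well-defined forgetful morphisms does \emph{not} change there, since for every $1<s\leq n-3$ exactly the $n-2$ light points can be forgotten (dropping the special or the heavy point leaves total weight $2$ or $1+\frac{s}{n-2}\leq 2$, violating Hassett's condition). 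The $S_2$ appears because the transposition of the weight-$\frac{n-3}{n-2}$ and weight-$1$ points becomes admissible in the sense of Definition \ref{atrans} (all relevant sums on both sides exceed $1$); it is visible as the hexagon symmetry $e_i\leftrightarrow l_i$ of the fan, not as a new forgetful map. Finally, ``the torus is normalized by the block permutations'' justifies only a semidirect product; directness as stated would require the permutations to centralize the torus, which the coordinate-permutation action visibly does not do, so this last bookkeeping step needs an actual argument rather than a one-line dismissal.
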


\begin{Remark}
The Hassett's space $\overline{M}_{0,A_{1,2}[5]}$ is the blow-up of $\mathbb{P}^{2}$ in three points in general position, that is a Del Pezzo surface $\mathcal{S}_{6}$ of degree $6$. By Theorem \ref{authaka} we recover the classical result on its automorphism group $\Aut(\mathcal{S}_{6})\cong (\mathbb{C}^{*})^{2}\times S_3\times S_2$. For a proof not using the theory of moduli of curves see \cite[Section 6]{DI}.\\
Furthermore, note that we are allowed to permute the points labeled by $1,2,3$ and to exchange the marked points $4,5$. However any permutation mapping $1,2$ or $3$ to $4$ or $5$ contracts a boundary divisor isomorphic to $\mathbb{P}^{1}$ to the point $\rho_{1}(E_{5,4})$, so it does not induce an automorphism. Furthermore the Cremona transformation lifts to the automorphism of $\overline{M}_{0,A_{1,2}[5]}$ corresponding to the transposition $4\leftrightarrow 5$.
\end{Remark}

\begin{Remark}
The step $r =1, s = n-3$ of Construction \ref{kblu} is the Losev-Manin's space $\overline{L}_{n-2}$, see \cite[Section 6.4]{Ha}. This space is a toric variety of dimension $n-3$. By Theorem \ref{authaka} we recover $(\mathbb{C}^{*})^{n-3}\subset\Aut(\overline{L}_{n-2})$. The automorphisms in $S_{n-2}\times S_{2}$ reflect on the toric setting as automorphisms of the fan of $\overline{L}_{n-2}$.\\
For example consider the Del Pezzo surface of degree six $\overline{M}_{0,A_{1,2}[5]}\cong\overline{L}_{3}\cong\mathcal{S}_{6}$. Let us say that $\mathcal{S}_{6}$ is the blow-up of $\mathbb{P}^{2}$ at the coordinate points $p_{1},p_{2},p_{3}$ with exceptional divisors $e_{1},e_{2},e_{3}$ and let us denote by $l_{i} = \left\langle p_{j},p_{k}\right\rangle$, $i\neq j,k$, $i = 1,2,3$, the three lines generated by $p_1,p_2,p_3$.\\
Such a surface can be realized as the complete intersection in $\mathbb{P}^{2}\times\mathbb{P}^{2}$ cut out by the equations $x_{0}y_{0} = x_{1}y_{1} = x_{2}y_{2}$. The six lines are given by $e_{i}=\{x_{j}=x_{k} = 0\}$, $l_{i}=\{y_{j}=y_{k} = 0\}$ for $i\neq j,k$, $i=1,2,3$. The torus $T = (\mathbb{C}^{*})^{3}/\mathbb{C}^{*}$ acts on $\mathbb{P}^{2}\times\mathbb{P}^{2}$ by
$$(\lambda_{0},\lambda_{1},\lambda_{2})\cdot ([x_{0}:x_{1}:x_{2}],[y_{0}:y_{1}:y_{2}]) = ([\lambda_{0}x_{0}:\lambda_{1}x_{1}:\lambda_{2}x_{2}],[\lambda_{0}^{-1}y_{0}:\lambda_{1}^{-1}y_{1}:\lambda_{2}^{-1}y_{2}]).$$
This torus action stabilizes $\mathcal{S}_{6}$. Furthermore $S_{2}$ acts on $\mathcal{S}_{6}$ by the transpositions $x_{i}\leftrightarrow y_{i}$, and $S_{3}$ acts on $\mathcal{S}_{6}$ by permuting the two sets of homogeneous coordinates separately. The action of $S_{3}$ corresponds to the permutations of the three points of $\mathbb{P}^{2}$ we are blowing-up, while the $S_{2}$-action is the switch of roles of exceptional divisors between the sets of lines $\{e_{1},e_{2},e_{3}\}$ and $\{l_{1},l_{2},l_{3}\}$. These six lines are arranged in a hexagon inside $\mathcal{S}_{6}$ 
$$\includegraphics[scale=0.3]{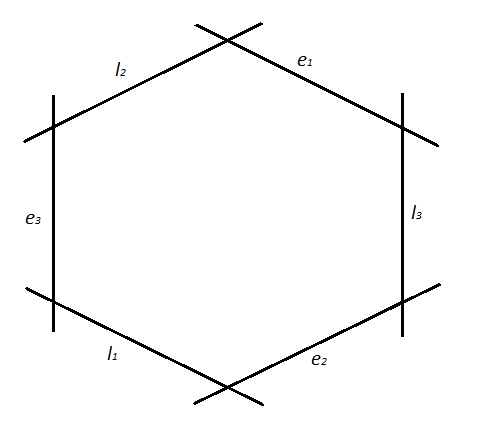}$$
which is stabilized by the action of $S_{3}\times S_{2}$. The fan of $\mathcal{S}_{6}$ is the following
$$\includegraphics[scale=0.3]{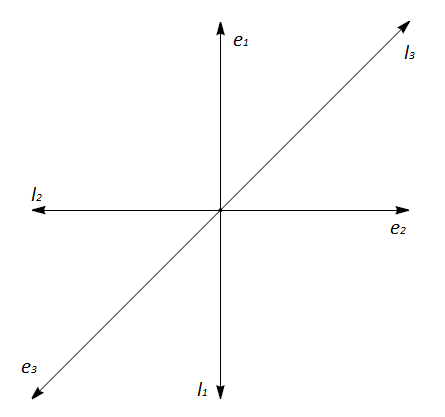}$$
where the six $1$-dimensional cones correspond to the toric divisors $e_{1},l_{3},e_{2},l_{1},e_{3}$ and $l_{2}$. It is clear from the picture that the fan has many symmetries given by permuting $\{e_{1},e_{2},e_{3}\}$, $\{l_{1},l_{2},l_{3}\}$ and switching $e_{i}$ with $l_{i}$ for $i = 1,2,3$. 
\end{Remark}

In higher genus all the forgetful maps are well defined as morphisms \cite[Lemma 3.8]{MM}. However, a transposition $i\leftrightarrow j$ of the marked points in order to induce an automorphism of $\overline{M}_{g,A[n]}$ has to preserve the weight data in a suitable sense. 

\begin{Example}
In $\overline{M}_{3,A[6]}$ with weights $(1/4,1/4,1/2,3/4,1,1)$ consider the divisor parametrizing reducible curves $C_{1}\cup C_{2}$, where $C_{1}$ has genus zero and weights $(1/4,1/4,3/4)$, and $C_{2}$ has genus three and weights $(1/2,1,1)$.
$$\includegraphics[scale=0.3]{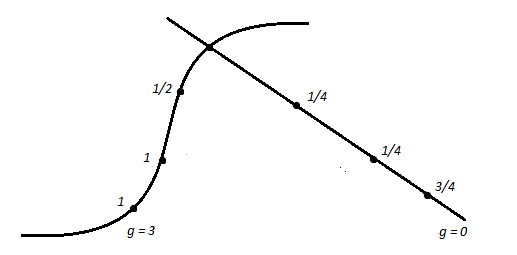}$$
After the transposition $3\leftrightarrow 4$ the genus zero component has markings $(1/4,1/4,1/1)$, so it is contracted. This means that the transposition induces a birational map
 \[
  \begin{tikzpicture}[xscale=2.5,yscale=-1.2]
    \node (A0_0) at (0, 0) {$\overline{M}_{3,A[6]}$};
    \node (A0_1) at (1, 0) {$\overline{M}_{3,A[6]}$};
    \path (A0_0) edge [->,dashed]node [auto] {$\scriptstyle{3\leftrightarrow 4}$} (A0_1);
  \end{tikzpicture}
  \]
contracting a divisor on a codimension two subscheme of $\overline{M}_{3,A[6]}$.
\end{Example}

We see that troubles come from rational tails with at least three marked points. To avoid this phenomenon the authors introduced the following definition.

\begin{Definition}\cite[Definition 3.10]{MM}\label{atrans}
A transposition $i\leftrightarrow j$ of two marked points with positive weights in $A$ is \textit{admissible} if and only if for any $h_{1},...,h_{r}\in\{1,...,n\}$, with $r\geq 2$,
$$a_{i}+\sum_{k = 1}^{r}a_{h_{k}}\leq 1 \iff a_{j}+\sum_{k = 1}^{r}a_{h_{k}}\leq 1.$$
\end{Definition}

In \cite[Lemma 3.13]{MM} the authors proved that the image of the morphism \ref{morha} is the subgroup $\mathcal{A}_{A[n]}$ of $S_n$ generated by the admissible transpositions. Finally in \cite[Theorems 3.15, 3.18]{MM} they managed to control the kernel of the morphism \ref{morha} and proved the following generalization of Theorem \ref{mamain}.
\begin{Theorem}
Let $\overline{\mathcal{M}}_{g,A[n]}$ be the Hassett's moduli stack parametrizing weighted $n$-pointed genus $g$ stable curves, and let $\overline{M}_{g,A[n]}$ be its coarse moduli space. If $g\geq 1$ and $2g-2+n\geq 3$ then
$$\Aut(\overline{\mathcal{M}}_{g,A[n]})\cong\Aut(\overline{M}_{g,A[n]})\cong \mathcal{A}_{A[n]}.$$
Furthermore 
\begin{itemize}
\item[-] $\Aut(\overline{M}_{1,A[2]})\cong (\mathbb{C}^{*})^{2}$ while $\Aut(\overline{\mathcal{M}}_{1,A[2]})$ is trivial,
\item[-] $\Aut(\overline{M}_{1,A[1]})\cong PGL(2)$ while $\Aut(\overline{\mathcal{M}}_{1,A[1]})\cong\mathbb{C}^{*}$.
\end{itemize}
\end{Theorem}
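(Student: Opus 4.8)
The plan is to run the argument of Theorem \ref{mamain} in the weighted category, replacing the unweighted fibration input by its Hassett analogue. First I would reduce the comparison between stack and coarse space: for $g\geq 1$ and $2g-2+n\geq 3$ all weights in $A$ are positive, so by \cite[Theorem 3.8]{Ha} a generic weighted curve carries no nontrivial automorphisms and $\overline{\mathcal{M}}_{g,A[n]}$ is a smooth Deligne-Mumford stack which is generically an algebraic space agreeing with its coarse moduli scheme. An automorphism of the coarse space therefore lifts canonically to the stack and conversely, giving $\Aut(\overline{\mathcal{M}}_{g,A[n]})\cong\Aut(\overline{M}_{g,A[n]})$; from here on it suffices to treat the coarse space. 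Next I would build the homomorphism $\chi$ of \ref{morha}: by \cite[Lemma 3.8]{MM} every forgetful map $\pi_i$ is a genuine morphism in genus $\geq 1$, and by the weighted fibration results \cite[Theorem 2.6, Proposition 2.7]{MM}—supplemented by Lemma \ref{g1} to rule out the tautological product fibrations allowed by Theorem \ref{GKM} when $g=1$—each composite $\pi_i\circ\phi$ factors uniquely through a forgetful morphism. This assigns to $\phi$ the permutation $\sigma_\phi$ and makes $\chi$ a well-defined group homomorphism, exactly as in the diagram preceding \ref{mor}. By \cite[Lemma 3.13]{MM} its image is precisely the subgroup $\mathcal{A}_{A[n]}$ generated by the admissible transpositions of Definition \ref{atrans}, so surjectivity onto the claimed group is settled.

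The heart of the proof is the triviality of $\ker\chi$, which I would establish by induction on $n$. An element $\phi\in\ker\chi$ satisfies $\pi_i\circ\phi=\tilde\phi_i\circ\pi_i$ with each $\tilde\phi_i\in\Aut(\overline{M}_{g,A[n-1]})$ again lying in the kernel of the corresponding homomorphism. Whenever $2g-2+(n-1)\geq 3$ the inductive hypothesis forces $\tilde\phi_i=\mathrm{id}$, so $\phi$ preserves every fiber of every $\pi_i$ and restricts on a general fiber to an automorphism of the associated pointed curve fixing all the remaining marked points; intersecting these conditions over all $i$ pins $\phi$ down to the identity on the dense locus of curves with trivial automorphism group, whence $\phi=\mathrm{id}$. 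The base cases are genus-dependent: for $g\geq 3$ one starts from $\overline{M}_{g,A[1]}\cong\overline{M}_{g,1}$, whose automorphism group is trivial by Theorem \ref{mamain}; for $g=2$ the generic hyperelliptic involution obstructs this, so instead I would first show that any automorphism preserves the boundary $\partial\overline{M}_{2,A[n]}$ and then invoke Royden's Theorem \ref{thRo} on the interior, as in the unweighted case.

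The main obstacle, and the place where the weighted setting genuinely departs from Theorem \ref{mamain}, is genus one. Here the inductive descent terminates at $n=3$, where forgetting a point lands in the exceptional space $\overline{M}_{1,A[2]}$ with $\Aut(\overline{M}_{1,A[2]})\cong(\mathbb{C}^*)^2$ nontrivial, so the induced $\tilde\phi_i$ need not be the identity and the clean fiberwise argument above breaks. I would handle this by analyzing $\overline{M}_{1,A[3]}$ directly: using Lemma \ref{g1} together with the explicit toric description of the small genus-one spaces from \cite[Theorem 2.3]{Ma}, one shows that the torus ambiguity in the three induced maps $\tilde\phi_i$ cannot be simultaneously compatible with all three forgetful fibrations unless it is trivial, which forces $\phi$ to act fiberwise and hence to be the identity.

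Finally the two special cases are read off from the explicit geometry of these small spaces. The space $\overline{M}_{1,A[1]}\cong\overline{M}_{1,1}$ is the $j$-line $\mathbb{P}^1$, so $\Aut(\overline{M}_{1,A[1]})\cong PGL(2)$, while the stack retains the generic elliptic involution, giving $\Aut(\overline{\mathcal{M}}_{1,A[1]})\cong\mathbb{C}^*$; and $\overline{M}_{1,A[2]}$ is the toric surface of \cite[Theorem 2.3]{Ma}, whence $\Aut(\overline{M}_{1,A[2]})\cong(\mathbb{C}^*)^2$ with the associated stack having only trivial automorphisms. I expect the genus-one inductive base at $n=3$, i.e. disentangling the $(\mathbb{C}^*)^2$ of fiberwise ambiguities, to be the hardest and most calculation-sensitive step, since it is exactly where the nontrivial automorphisms of the excluded range $2g-2+n<3$ re-enter the picture.
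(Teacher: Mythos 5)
Your proposal is correct and follows essentially the same route as the paper, whose survey-level proof consists exactly of the homomorphism $\chi$ of \ref{morha} built from the weighted fibration results \cite[Theorem 2.6, Proposition 2.7]{MM}, the identification $\im(\chi)=\mathcal{A}_{A[n]}$ via the admissible transpositions of \cite[Lemma 3.13]{MM}, and the kernel analysis of \cite[Theorems 3.15, 3.18]{MM}, with the exceptional cases $\overline{M}_{1,A[1]}$ and $\overline{M}_{1,A[2]}$ read off from the explicit ($j$-line and toric) descriptions. Your inductive treatment of the kernel (generic triviality of automorphisms for $g\geq 3$, boundary preservation plus Royden's Theorem \ref{thRo} for $g=2$, and a direct analysis of the small genus-one spaces where the $(\mathbb{C}^{*})^{2}$-ambiguity enters) mirrors the strategy of Theorem \ref{mamain} precisely as the paper indicates.
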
 

\section{Kapranov's and Keel's spaces}\label{kk}

The Construction \ref{kblu} provides a factorization of the Kapranov's blow-up construction of $\overline{M}_{0,n}$. There are many other factorizations of the morphisms $f_i:\overline{M}_{0,n}\rightarrow\mathbb{P}^{n-3}$ as compositions of reduction morphisms. We consider other two factorizations. The first is due to Kapranov \cite[Section 6.2]{Ha}.

\begin{Construction}\label{kblusym}
Fixed $(n-1)$-points $p_{1},...,p_{n-1}\in\mathbb{P}^{n-3}$ in linear general position:
\begin{itemize}
\item[(1)] Blow-up the points $p_{1},...,p_{n-1}$, 
\item[(2)] Blow-up the strict transforms of the lines $\Span{p_{i_{1}},p_{i_{2}}}$, $i_{1},i_{2} = 1,...,n-1$,\\
\vdots
\item[($k$)] Blow-up the strict transforms of the $(k-1)$-planes $\Span{p_{i_{1}},...,p_{i_{k}}}$, $i_{1},...,i_{k} = 1,...,n-1$,\\
\vdots
\item[($n-4$)] Blow-up the strict transforms of the $(n-5)$-planes $\Span{p_{i_{1}},...,p_{i_{n-4}}}$, $i_{1},...,i_{n-4} = 1,...,n-1$.
\end{itemize}
Now, consider the Hassett's spaces $X_{k}[n]:=\overline{M}_{0,A[n]}$ for $k = 1,...,n-4$, such that
\begin{itemize}
\item[-] $a_{1}+a_{n}>1$ for $i=1,...,n-1$,
\item[-] $a_{i_{1}}+...+a_{i_{h}}\leq 1$ for each $\{i_{1},...,i_{h}\}\subset\{1,...,n-1\}$ with $r\leq n-k-2$,
\item[-] $a_{i_{1}}+...+a_{i_{h}}> 1$ for each $\{i_{1},...,i_{h}\}\subset\{1,...,n-1\}$ with $r> n-k-2$.
\end{itemize}
Then $X_{k}[n]$ is isomorphic to the variety obtained at the step $k$ of the blow-up construction.
\end{Construction}

The Hassett's spaces appearing in \cite[Section 6.3]{Ha} are strictly related to the construction of $\overline{M}_{0,n}$ provided by \textit{S. Keel} in \cite{Ke}. These spaces give another factorization of the Kapranov's map $f_i:\overline{M}_{0,n}\rightarrow\mathbb{P}^{n-3}$. 

\begin{Construction}\cite[Section 6.3]{Ha}\label{con2}
We start with the variety $Y_{0}[n] := (\mathbb{P}^{1})^{n-3}$ which can be realized as a Hassett's space $\overline{M}_{0,A[n]}$ where $A[n] = (a_1,...,a_n)$ satisfy the following conditions:
\begin{itemize}
\item[-] $a_i + a_j > 1$ where $\{i,j\}\subset \{1,2,3\}$,
\item[-] $a_i+a_{j_{1}}+...+a_{j_{r}}\leq 1$ for $i = 1,2,3$, $\{j_1,...,j_r\}\subseteq \{4,...,n\}$, with $r\geq 2$.
\end{itemize} 
Let $\Delta_{d}$ be the locus in $(\mathbb{P}^{1})^{n-3}$ where at least $n-2-d$ of the points coincide, that is the $d$-dimensional diagonal. Let $\pi_i:(\mathbb{P}^{1})^{n-3}\rightarrow\mathbb{P}^{1}$ for $i = 1,...,n-3$ be the projections, and let
$$F_{0}:= \pi_1^{-1}(0)\cup...\cup\pi_{n-3}^{-1}(0).$$
We define $F_{1}$ and $F_{\infty}$ similarly and use the same notation for proper transforms. Consider the following sequence of blow-ups
\begin{itemize}
\item[($1$)] Blow-up $\Delta_{1}\cap (F_{0}\cup F_{1}\cup F_{\infty})$.\\
\vdots
\item[($h$)] Blow-up $\Delta_{k}\cap (F_{0}\cup F_{1}\cup F_{\infty})$.\\
\vdots
\item[($n-4$)] Blow-up $\Delta_{n-4}\cap (F_{0}\cup F_{1}\cup F_{\infty})$.
\end{itemize}
The variety $Y_{h}[n]$ obtained at the step $h$ can be realized as a Hassett's space $\overline{M}_{0,A[n]}$ where the weights satisfy the following conditions:
\begin{itemize}
\item[-] $a_i+a_j > 1$ if $\{i,j\}\subset \{1,2,3\}$,
\item[-] $a_i+a_{j_1}+...+a_{j_r}\leq 1$ if $i\in \{1,2,3\}$ and $\{j_1,...,j_r\}\subset\{4,...,n\}$ with $0< r\leq n-h-3$, 
\item[-] $a_i+a_{j_1}+...+a_{j_r}> 1$ if $i\in \{1,2,3\}$ and $\{j_1,...,j_r\}\subset\{4,...,n\}$ with $r> n-h-3$.
\end{itemize}
Now, we consider another sequence of blow-ups starting from $Y_{n-4}[n]$.
\begin{itemize}
\item[($n-3$)] Blow-up $\Delta_{1}$.\\
\item[($n-2$)] Blow-up $\Delta_{2}$.\\
\vdots
\item[($2n-9$)] Blow-up $\Delta_{n-5}$.
\end{itemize}

The variety $Y_{h}[n]$ obtained at the step $h$ can be realized as a Hassett's space $\overline{M}_{0,A[n]}$ where the weights satisfy the following conditions:
\begin{itemize}
\item[-] $a_i+a_j > 1$ if $\{i,j\}\subset \{1,2,3\}$,
\item[-] $a_{j_1}+...+a_{j_r}\leq 1$ if $\{j_1,...,j_r\}\subset\{4,...,n\}$ with $0< r\leq 2n-h-7$, 
\item[-] $a_{j_1}+...+a_{j_r}> 1$ if $\{j_1,...,j_r\}\subset\{4,...,n\}$ with $r> 2n-h-7$.
\end{itemize}
\end{Construction}

\begin{Remark}
For instance taking 
$$A = (1-(n-3)\epsilon,1-(n-3)\epsilon,1-(n-3)\epsilon,\epsilon,...,\epsilon)$$
where $\epsilon$ is an arbitrarily small positive rational number, we have $\overline{M}_{0,A[n]}\cong (\mathbb{P}^{1})^{n-3}$. Note that $(\mathbb{P}^{1})^{2}$ does not admit any birational morphism to $\mathbb{P}^{2}$. However at the first step of Construction \ref{con2} we get $(\mathbb{P}^{1})^{2}$ blown-up at three points on the diagonal. Such blow-up is isomorphic to the blow-up of $\mathbb{P}^{2}$ at four general points that is $\overline{M}_{0,5}$. In the following we will prove that this fact holds also in higher dimension. More precisely the spaces $Y_{h}[n]$ factor Kapranov, in the sense of Definition \ref{dfactkap}, for any $n-4\leq h\leq 2n-9$.
\end{Remark}

Recall that the Hassett's spaces $\overline{M}_{A_{r,s}[n]}$ are the intermediate steps of Construction \ref{kblu}. 

\begin{Lemma}\label{l1}
The Hassett's spaces $Y_{h}[n]$ admit a reduction morphism to $\overline{M}_{A_{2,1}[n]}$ for any $n-4\leq h\leq 2n-9$. Furthermore $Y_{n-3}[n]\cong \overline{M}_{A_{2,2}[n]}$. 
\end{Lemma}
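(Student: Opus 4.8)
The plan is to regard both the Kapranov tower of Construction \ref{kblu}, whose steps are the spaces $\overline{M}_{0,A_{r,s}[n]}$, and the Keel-type tower of Construction \ref{con2}, whose steps are the $Y_h[n]$, as factorizations of one and the same Kapranov morphism $f_i\colon\overline{M}_{0,n}\to\mathbb{P}^{n-3}$, and to compare them. Recall from Construction \ref{kblu} that $\overline{M}_{0,A_{2,1}[n]}$ is the blow-up of $\mathbb{P}^{n-3}$ at the $n-1$ points $p_1,\dots,p_{n-1}$ only (for $n=5$ this is $\overline{M}_{0,5}$), whereas $\overline{M}_{0,A_{2,2}[n]}$ is its further blow-up along the lines $\langle p_i,p_j\rangle$. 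The bridge between the two towers is the birational identification $(\mathbb{P}^1)^{n-3}\map\mathbb{P}^{n-3}$, the higher-dimensional analogue of the isomorphism between $(\mathbb{P}^1)^2$ blown up at three diagonal points and $\mathbb{P}^2$ blown up at four points recalled in the preceding Remark; I would use it to transport the centres blown up in the Keel tower to centres in $\mathbb{P}^{n-3}$.

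For the first assertion I would show that, once the first sequence of Construction \ref{con2} is completed, that is for every $h\geq n-4$, the blow-ups of the loci $\Delta_k\cap(F_0\cup F_1\cup F_\infty)$ have resolved exactly what corresponds, under the identification above, to the $n-1$ points of $\mathbb{P}^{n-3}$. Since by the preceding Remark $Y_h[n]$ factors Kapranov, it carries a Kapranov factorization $\rho_2\colon Y_h[n]\to\mathbb{P}^{n-3}$ in the sense of Definition \ref{dfactkap}; the fact that the $n-1$ points are already blown up on $Y_h[n]$ forces $\rho_2$, by the universal property of the blow-up, to factor as $Y_h[n]\to\overline{M}_{0,A_{2,1}[n]}\to\mathbb{P}^{n-3}$, and the first arrow is the required reduction morphism. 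As a consistency check, for $h\geq n-3$ one may also obtain it by composing the tautological blow-down $Y_h[n]\to Y_{n-3}[n]$, given by the later steps of Construction \ref{con2}, with the morphism produced for $Y_{n-3}[n]$.

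For the second assertion I would add the single step $h=n-3$ and match centres precisely. This step blows up the main diagonal $\Delta_1$, and the claim is that, under the identification $(\mathbb{P}^1)^{n-3}\map\mathbb{P}^{n-3}$, this is exactly the modification that creates the exceptional divisors over the lines $\langle p_i,p_j\rangle$; hence up to step $n-3$ the Keel tower has produced precisely the blow-up of $\mathbb{P}^{n-3}$ at the $n-1$ points and the lines joining them, which is $\overline{M}_{0,A_{2,2}[n]}$ by Construction \ref{kblu}. Matching these centres gives the isomorphism $Y_{n-3}[n]\cong\overline{M}_{0,A_{2,2}[n]}$. This is coherent with the first part, since the weight datum of $A_{2,2}[n]$ dominates that of $A_{2,1}[n]$ coordinatewise (they agree except where the weight drops from $2/(n-3)$ to $1/(n-3)$), so that $\overline{M}_{0,A_{2,2}[n]}\to\overline{M}_{0,A_{2,1}[n]}$ is a genuine Hassett reduction.

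The step I expect to be the main obstacle is the centre-by-centre bookkeeping behind the identification $(\mathbb{P}^1)^{n-3}\map\mathbb{P}^{n-3}$: one must follow strict transforms through the prescribed order of blow-ups in both towers and, in particular, explain how the single diagonal blow-up of $\Delta_1$ accounts for the whole family of line blow-ups on the Kapranov side, and how the symmetric treatment of the light markings $\{4,\dots,n\}$ in Construction \ref{con2} is reconciled with the asymmetric weights of $A_{2,2}[n]$ (two markings of weight $1$ and one of weight $2/(n-3)$). Keeping exact track of which diagonal and coordinate loci have been affected at step $n-3$, and of how they meet the exceptional divisors already created, is the delicate point on which the whole comparison rests.
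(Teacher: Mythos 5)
There are two genuine gaps in your plan, one logical and one factual. Logically, you invoke ``by the preceding Remark $Y_h[n]$ factors Kapranov'' to obtain the morphism $\rho_2\colon Y_h[n]\to\mathbb{P}^{n-3}$; but that Remark only \emph{announces} the fact, which the paper proves in Proposition \ref{p1} precisely by composing the reduction morphism of Lemma \ref{l1} with the Kapranov factorization of $\overline{M}_{0,A_{2,1}[n]}$ --- so using it to prove Lemma \ref{l1} is circular, and you offer no independent construction of $\rho_2$. Factually, your identification of the two target spaces is wrong for $n\geq 6$: by Construction \ref{kblu}, $\overline{M}_{0,A_{2,1}[n]}$ sits at step $r=2$, $s=1$, hence \emph{after} all of step $(1)$, which blows up not only the points $p_1,\dots,p_{n-2}$ but also the lines, planes, \dots, $(n-5)$-planes spanned by subsets of $\{p_1,\dots,p_{n-2}\}$; it is the blow-up of $\mathbb{P}^{n-3}$ at the $n-1$ points only in the case $n=5$ from which you extrapolated. (For $n=6$ the blow-up of $\mathbb{P}^3$ at five general points has Picard number $6$, whereas $\overline{M}_{0,A_{2,1}[6]}$ has Picard number $12=1+5+6$, the six extra classes coming from the lines $\langle p_i,p_j\rangle$ with $i,j\leq 4$.) Likewise $\overline{M}_{0,A_{2,2}[n]}$ adds only the $n-3$ lines $\langle p_{n-1},p_i\rangle$ with $i\neq n-2$, not the whole family $\langle p_i,p_j\rangle$. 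Consequently, even granting $\rho_2$, the universal property of the blow-up at the $n-1$ points would only factor it through that point blow-up, which is not $\overline{M}_{0,A_{2,1}[n]}$; your centre-matching for $h=n-3$ aims at the wrong variety; and the blow-up of the single irreducible centre $\Delta_1$ creates one exceptional divisor, so it cannot by itself ``account for the whole family of line blow-ups''. The bookkeeping you defer as the main obstacle is thus not merely delicate but set against the wrong dictionary: already for $n=6$, the weight data of $Y_{n-4}[n]$ given in the paper's proof show that the reduction $Y_{n-4}[n]\to\overline{M}_{0,A_{2,1}[n]}$ contracts divisors (the Picard number drops from $15$ to $12$), so the two towers do not match up as blow-ups of $\mathbb{P}^{n-3}$ along nested common centres.

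The paper's actual proof avoids all geometry of centres and stays inside Hassett's weight formalism. Since each later step of Construction \ref{con2} only enlarges the weights, there are tautological reductions $Y_h[n]\to Y_{n-4}[n]$ for $h\geq n-3$, so it suffices to treat $Y_{n-4}[n]$, which is $\overline{M}_{0,A[n]}$ with $A[n]=(1-\epsilon,1-\epsilon,1-\epsilon,\epsilon,\dots,\epsilon)$; by \cite[Corollary 4.7]{Ha} this may be replaced, up to isomorphism of coarse spaces, by $A'[n]=(1,1,1-\epsilon,\epsilon,\dots,\epsilon)$, and choosing $\epsilon=1/(n-3)$ one has $A'[n]\geq A_{2,1}[n]$ coordinatewise, so Hassett's theory of reductions immediately produces the desired morphism. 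The isomorphism $Y_{n-3}[n]\cong\overline{M}_{0,A_{2,2}[n]}$ is then read off by observing that the chamber conditions of Construction \ref{con2} at $h=n-3$ are realized by the weight data $(1,1,1/(n-3),\dots,1/(n-3),2/(n-3))$, which is exactly $A_{2,2}[n]$ up to relabelling; this also dissolves your worry about reconciling the symmetric treatment of the light markings with the asymmetric weights, since only chamber membership matters. If you wish to salvage a geometric comparison, you would first have to list the correct centres on both sides and verify divisoriality of their preimages centre by centre; the weight-data route renders all of that unnecessary.
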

\begin{proof}
By construction $Y_{h}[n]$ has a reduction morphism to $Y_{n-4}[n]$ for any $h\geq n-3$. So it is enough to prove the statement for $Y_{n-4}[n]$. This variety is isomorphic to $\overline{M}_{0,A[n]}$ with
$$A[n] = (1-\epsilon,1-\epsilon,1-\epsilon,\epsilon,...,\epsilon).$$
By \cite[Corollary 4.7]{Ha}, the reduction morphism
$$A^{'}[n]:= (1,1,1-\epsilon,\epsilon,...,\epsilon)\mapsto A[n]$$
is an isomorphism. So we may proceed with $A^{'}[n]$ instead of $A[n]$. Now, take $\epsilon = \frac{1}{n-3}$ and consider the reduction morphism
$$A^{'}[n]:= (1,1,1-1/(n-3),1/(n-3),...,1/(n-3))\mapsto A_{2,1}[n].$$
The space $Y_{n-3}[n]$ can be realized as a Hassett's space with weight data
$$A[n] = (1,1,1/(n-3),1/(n-3),...,1/(n-3),2/(n-3)).$$
This is the weight data of the Hassett's spaces produced at the step $r = s = 2$ of Construction \ref{kblu}. 
\end{proof}

\begin{Proposition}\label{p1}
The Hassett's spaces $Y_{h}[n]$ factor Kapranov for any $n-4\leq h\leq 2n-9$.
\end{Proposition}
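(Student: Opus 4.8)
The plan is to deduce the statement directly from Lemma \ref{l1} together with the fact that the intermediate steps of Construction \ref{kblu} factor Kapranov by construction. First I would invoke Lemma \ref{l1} to obtain, for every $h$ with $n-4\leq h\leq 2n-9$, a reduction morphism $r_{h}\colon Y_{h}[n]\to\overline{M}_{A_{2,1}[n]}$. The target $\overline{M}_{A_{2,1}[n]}$ is precisely the variety produced at the step $r=2,\, s=1$ of Construction \ref{kblu}, so it sits inside the Kapranov blow-up tower between $\overline{M}_{0,n}$ and $\mathbb{P}^{n-3}$. In particular there is a blow-down morphism $g\colon\overline{M}_{A_{2,1}[n]}\to\mathbb{P}^{n-3}$ which, precomposed with the reduction $\rho\colon\overline{M}_{0,n}\to\overline{M}_{A_{2,1}[n]}$, recovers the Kapranov map $f_{n}$; thus $g$ is a Kapranov's factorization for $\overline{M}_{A_{2,1}[n]}$ in the sense of Definition \ref{dfactkap}.

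Next I would set $\rho_{2}:=g\circ r_{h}\colon Y_{h}[n]\to\mathbb{P}^{n-3}$ and take $\rho_{1}\colon\overline{M}_{0,n}\to Y_{h}[n]$ to be the reduction morphism, which exists because all the weights defining $Y_{h}[n]$ are at most $1$. With these two maps in hand the only thing left is to verify that the triangle of Definition \ref{dfactkap} commutes, i.e.\ that $\rho_{2}\circ\rho_{1}=f_{n}$.

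This last verification is where I would be most careful, and it is the real (if modest) content of the argument. Both $r_{h}\circ\rho_{1}$ and the tower reduction $\rho$ are reduction morphisms $\overline{M}_{0,n}\to\overline{M}_{A_{2,1}[n]}$; since a reduction morphism between Hassett's spaces with fixed $(g,n)$ is canonically determined by the two weight data, and a composition of reductions is again the reduction attached to the corresponding weight comparison, the two coincide. Hence $\rho_{2}\circ\rho_{1}=g\circ(r_{h}\circ\rho_{1})=g\circ\rho=f_{n}$, and $Y_{h}[n]$ factors Kapranov. The whole difficulty has therefore been pushed into Lemma \ref{l1}: once the reduction to $\overline{M}_{A_{2,1}[n]}$ is available, the proposition is a formal consequence of the uniqueness of reduction morphisms and of the factorization already built into Construction \ref{kblu}.
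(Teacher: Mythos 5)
Your proposal is correct and takes essentially the same route as the paper: invoke Lemma \ref{l1} to obtain the reduction morphism $Y_{h}[n]\rightarrow\overline{M}_{A_{2,1}[n]}$, compose with the Kapranov factorization that $\overline{M}_{A_{2,1}[n]}$ carries as an intermediate step of Construction \ref{kblu}, and conclude commutativity of the triangle in Definition \ref{dfactkap} from the fact that reduction morphisms are determined by their weight data and compose to reductions. Your explicit verification that $r_{h}\circ\rho_{1}$ coincides with the canonical reduction $\overline{M}_{0,n}\rightarrow\overline{M}_{A_{2,1}[n]}$ is precisely the step the paper compresses into the remark that $\rho$, $\rho_{1}$ and $\psi$ are all reduction morphisms, so that $\rho_{1}=\rho\circ\psi$ and $f_{i}=\rho_{2}\circ(\rho\circ\psi)$.
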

\begin{proof}
By Lemma \ref{l1} we have a reduction morphism $\rho:Y_{h}[n]\rightarrow\overline{M}_{A_{2,1}[n]}$. Since $\overline{M}_{A_{2,1}[n]}$ factors Kapranov we get the following commutative diagram
\[
  \begin{tikzpicture}[xscale=2.8,yscale=-1.2]
    \node (A0_1) at (1, 0) {$\overline{M}_{0,n}$};
    \node (A1_0) at (0, 1) {$Y_{h}[n]$};
    \node (A2_0) at (0, 2) {$\overline{M}_{0,A_{1,2}[n]}$};
    \node (A3_1) at (1, 3) {$\mathbb{P}^{n-3}$};
    \path (A0_1) edge [->] node [auto] {$\scriptstyle{\rho_{1}}$} (A2_0);
    \path (A0_1) edge [->] node [auto] {$\scriptstyle{f_{i}}$} (A3_1);
    \path (A2_0) edge [->] node [auto] {$\scriptstyle{\rho_{2}}$} (A3_1);
    \path (A0_1) edge [->,swap] node [auto] {$\scriptstyle{\psi}$} (A1_0);
    \path (A1_0) edge [->,swap] node [auto] {$\scriptstyle{\rho}$} (A2_0);
  \end{tikzpicture}
  \]
where $\psi$ is a reduction morphism. Since $\rho$, $\rho_1$ and $\psi$ are reduction morphisms $\rho_1 = \rho\circ\psi$ and $f_i = \rho_2\circ (\rho\circ\psi)$.
\end{proof}

\begin{Proposition}\label{fac1}
Assume that $\overline{M}_{0,A[r]}$ factors Kapranov. Let $f:X_{k}[n]\rightarrow\overline{M}_{0,A[r]}$ be a dominant morphism with connected fibers. Then, for any $1\leq k\leq n-4$, the morphism $f$ factors through a forgetful map. Furthermore the same holds taking $Y_{h}[n]$ instead of $X_{k}[n]$ for any $n-4\leq h\leq 2n-9$.
\end{Proposition}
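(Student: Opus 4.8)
The plan is to reduce the statement to the genus zero fibration theorem on $\overline{M}_{0,n}$ (Theorem \ref{bmf}) and to its weighted refinement obtained in \cite[Theorem 2.6, Proposition 2.7]{MM}, by exploiting the reduction morphisms supplied by the hypothesis that the spaces in play factor Kapranov. Throughout write $W$ for either $X_{k}[n]$ with $1\le k\le n-4$ or $Y_{h}[n]$ with $n-4\le h\le 2n-9$. Both classes factor Kapranov: for $Y_{h}[n]$ this is Proposition \ref{p1}, while for $X_{k}[n]$ it follows directly from Construction \ref{kblusym}, since by definition $X_{k}[n]$ is an intermediate step of the symmetric Kapranov blow-up and hence sits in a factorization $\overline{M}_{0,n}\xrightarrow{\psi}W\xrightarrow{\rho_{2}}\mathbb{P}^{n-3}$ with $\rho_{2}\circ\psi$ a Kapranov map and $\psi$ a reduction morphism.

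First I would precompose $f$ with the reduction $\psi\colon\overline{M}_{0,n}\to W$ to form $\tilde f:=f\circ\psi\colon\overline{M}_{0,n}\to\overline{M}_{0,A[r]}$. Since $\psi$ is a projective birational morphism onto a normal variety it has connected fibres by Zariski's main theorem, and since $f$ is dominant with connected fibres the composite $\tilde f$ is again dominant with connected fibres. As $\overline{M}_{0,A[r]}$ factors Kapranov by hypothesis, $\tilde f$ is a fibration of $\overline{M}_{0,n}$ onto a Hassett space of the admissible type, so the weighted version of Theorem \ref{bmf} in \cite[Theorem 2.6, Proposition 2.7]{MM} applies (alternatively one postcomposes with the contraction $\rho_{2}\colon\overline{M}_{0,A[r]}\to\mathbb{P}^{r-3}$ and reduces to Theorem \ref{bmf} itself): there is a forgetful morphism $\pi\colon\overline{M}_{0,n}\to\overline{M}_{0,A[r]}$, forgetting the $n-r$ markings outside the index set of $A[r]$, and a morphism $g\colon\overline{M}_{0,A[r]}\to\overline{M}_{0,A[r]}$ with $\tilde f=g\circ\pi$.

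It then remains to descend this factorization from $\overline{M}_{0,n}$ to $W$. The point is that $\pi$ is constant on the fibres of $\psi$: the curves contracted by the reduction $\psi$ lie in the boundary strata parametrizing rational tails whose total weight, in the weight data defining $W$, does not exceed $1$, and by the explicit weight inequalities of Constructions \ref{kblusym} and \ref{con2} together with Lemma \ref{l1} these are precisely strata on which $\pi$ is already constant; equivalently, the reduction and forgetful morphisms, both defined by collapsing the components where the relevant log canonical divisor fails to be ample, commute within the Kapranov tower. By the rigidity lemma $\pi$ therefore factors as $\pi=\bar\pi\circ\psi$ for a forgetful morphism $\bar\pi\colon W\to\overline{M}_{0,A[r]}$. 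Since $\psi$ is surjective it is an epimorphism of varieties, so from $f\circ\psi=\tilde f=g\circ\pi=g\circ\bar\pi\circ\psi$ we cancel $\psi$ to obtain $f=g\circ\bar\pi$; hence $f$ factors through the forgetful map $\bar\pi$. The very same argument, with $Y_{h}[n]$ in place of $X_{k}[n]$ and the factorization of Proposition \ref{p1}, settles the last assertion.

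The step I expect to be the main obstacle is the descent, namely verifying that the forgetful morphism $\pi$ produced downstairs on $\overline{M}_{0,n}$ is compatible with the reduction $\psi$, i.e. that every curve contracted by $\psi$ is contracted by $\pi$, so that $\bar\pi$ exists and is itself a genuine forgetful morphism of Hassett spaces. This is exactly where the precise weight inequalities defining $X_{k}[n]$ and $Y_{h}[n]$ must be used, and where one checks that the set of forgotten markings is admissible with respect to those weights; by contrast, the reduction to $\overline{M}_{0,n}$ and the appeal to \cite{MM} (or to Theorem \ref{bmf}) are comparatively formal once the spaces are known to factor Kapranov.
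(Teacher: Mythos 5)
Your proposal takes an unnecessary detour, and the detour creates a genuine gap. The paper's proof is essentially one line: since every $X_{k}[n]$ factors Kapranov by Construction \ref{kblusym} (this is Hassett's realization of the symmetric Kapranov tower in \cite[Section 6.2]{Ha}), and every $Y_{h}[n]$ with $n-4\leq h\leq 2n-9$ factors Kapranov by Proposition \ref{p1}, the weighted fibration theorem \cite[Theorem 2.6]{MM} applies \emph{directly} to $f\colon W\rightarrow\overline{M}_{0,A[r]}$ --- the hypothesis of that theorem is precisely that the spaces involved factor Kapranov in the sense of Definition \ref{dfactkap}, with the Hassett space as the \emph{source}. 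You record in your first paragraph that both classes factor Kapranov, but you then use this fact only to produce the reduction $\psi\colon\overline{M}_{0,n}\rightarrow W$ and apply the cited theorem with source $\overline{M}_{0,n}$ instead of $W$. This forces you into a descent problem that the direct application avoids entirely.

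The descent step is where the argument breaks. From $f\circ\psi=g\circ\pi$ you know only that $g\circ\pi$ is constant on the fibres of $\psi$; this does not make $\pi$ itself constant on them, because $g$ may contract the image $\pi(F)$ of such a fibre (nothing prevents its exceptional or fibre locus from meeting the relevant boundary strata). Concretely, $\psi$ contracts boundary divisors $D_{0,S}$ whose fibres over general points of their images are isomorphic to $\overline{M}_{0,|S|+1}$, the moduli of the tail; if $\pi$ forgets the markings outside an index set $I$, its restriction to such a fibre is the forgetful map $\overline{M}_{0,|S|+1}\rightarrow\overline{M}_{0,|S\cap I|+1}$, which is constant if and only if $|S\cap I|\leq 2$. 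Nothing in your argument ties the set $I$ produced by the factorization theorem on $\overline{M}_{0,n}$ to the weight inequalities defining $X_{k}[n]$ or $Y_{h}[n]$: the sentence claiming that the $\psi$-contracted strata ``are precisely strata on which $\pi$ is already constant'' is asserted, not proved, and establishing it amounts to reproving \cite[Theorem 2.6]{MM} in this special case. Moreover, rigidity may only be invoked after that constancy is known, and even then you would still have to check that the descended map is a genuine forgetful morphism of Hassett spaces (in genus zero, that the retained weights satisfy $a_{i_{1}}+\cdots+a_{i_{r}}>2$). You correctly flag this as the main obstacle, but it remains unfilled; the repair is simply to apply \cite[Theorem 2.6]{MM} to $f$ itself, as the paper does.
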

\begin{proof}
Since any space $X_{k}[n]$ of Construction \ref{kblusym} factors Kapranov and by Proposition \ref{p1} any $Y_{h}[n]$ with $n-4\leq h\leq 2n-9$ factors Kapranov as well it is enough to apply \cite[Theorem 2.6]{MM}.
\end{proof}

\begin{Theorem}\label{th:XY}
The automorphism groups of the Hassett's spaces $X_{k}[n]$ for $1\leq k\leq n-4$ and $Y_{h}[n]$ for $n-4\leq h\leq 2n-9$ are given by 
$$\Aut(X_{k}[n])\cong\Aut(Y_{h}[n])\cong S_{n}.$$
\end{Theorem}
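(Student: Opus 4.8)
The plan is to reproduce, in the generality of spaces factoring Kapranov, the three-step scheme that computes $\Aut(\overline{M}_{0,n})$: produce a group homomorphism to $S_{n}$, show it is surjective, and show that its kernel is trivial by transporting the question to the birational geometry of $\mathbb{P}^{n-3}$. The treatment of $X_{k}[n]$ and $Y_{h}[n]$ can be made uniform: by Construction \ref{kblusym} each $X_{k}[n]$ factors Kapranov in the sense of Definition \ref{dfactkap}, while by Proposition \ref{p1} each $Y_{h}[n]$ with $n-4\leq h\leq 2n-9$ factors Kapranov as well, so from this point on I would argue for an arbitrary Hassett space factoring Kapranov and specialize at the end.

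\emph{Construction of the homomorphism.} Since these spaces factor Kapranov, Proposition \ref{fac1} applies: every dominant morphism with connected fibers to a Hassett space factoring Kapranov factors through a forgetful map. Thus, given $\phi\in\Aut(X_{k}[n])$ and a forgetful map $\pi_{i}$, the fibration $\pi_{i}\circ\phi^{-1}$ factors through a unique forgetful map $\pi_{j_{i}}$, and, exactly as in the construction of the morphism \ref{mor}, setting $\sigma_{\phi}(i)=j_{i}$ yields a well-defined homomorphism $\chi\colon\Aut(X_{k}[n])\to S_{n}$ (and likewise for $Y_{h}[n]$); the uniqueness of the factorization, argued verbatim as for \ref{mor}, guarantees that $\chi$ is multiplicative.

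\emph{Surjectivity.} Here I would exhibit a copy of $S_{n}$ inside $\Aut(X_{k}[n])$. The symmetric Kapranov construction blows up the subspaces spanned by subsets of $\{p_{1},\dots,p_{n-1}\}$ in a way depending only on their cardinality; since $n-1=(n-3)+2$ general points form a projective frame of $\mathbb{P}^{n-3}$, the subgroup of $PGL_{n-2}$ permuting them is $S_{n-1}$, and it lifts to $X_{k}[n]$ because it permutes the blow-up centers among themselves. The remaining transposition, the one moving the distinguished marking, is realized by the standard Cremona transformation of $\mathbb{P}^{n-3}$ based at the $p_{i}$, which is resolved by the very blow-ups defining $X_{k}[n]$ and hence lifts to a regular automorphism; together these generate a copy of $S_{n}\subseteq\Aut(X_{k}[n])$ on which $\chi$ restricts to the identity, so $\chi$ is surjective. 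For $Y_{h}[n]$ the same automorphisms are transported along the reduction morphism of Lemma \ref{l1} to $\overline{M}_{A_{2,1}[n]}$. Checking that the Cremona automorphism is regular already on the intermediate model, rather than only on $\overline{M}_{0,n}$, is the one delicate point of this step, and it follows from the invariance of the blow-up centers under the transformation.

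\emph{Triviality of the kernel.} This is the step I expect to be the main obstacle, and it is precisely the computation promised earlier for $\overline{M}_{0,n}$. Let $\phi\in\ker\chi$. Composing with the Kapranov factorization $\rho_{2}\colon X_{k}[n]\to\mathbb{P}^{n-3}$ produces a birational self-map $\overline{\phi}=\rho_{2}\circ\phi\circ\rho_{2}^{-1}$ of $\mathbb{P}^{n-3}$. Since $\phi$ commutes with every forgetful map, $\overline{\phi}$ must fix each of the $n-1$ points $p_{i}$, stabilize every line $\langle p_{i},p_{j}\rangle$, and stabilize every rational normal curve of degree $n-3$ through $p_{1},\dots,p_{n-1}$. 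The crux is to deduce from these constraints that $\overline{\phi}$ is the identity: fixing the points and stabilizing the lines determines $\overline{\phi}$ on the $1$-skeleton of the configuration, and the fact that the rational normal curves through the $n-1$ points form a family sweeping out $\mathbb{P}^{n-3}$, each preserved by $\overline{\phi}$, forces $\overline{\phi}$ to be the identity on a dense open set and hence everywhere. As $\rho_{2}$ is birational this gives $\phi=\mathrm{id}$, so $\ker\chi$ is trivial and $\chi$ is an isomorphism onto $S_{n}$; the argument for $Y_{h}[n]$ is identical, using their Kapranov factorization from Proposition \ref{p1}.
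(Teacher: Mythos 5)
Your kernel step contains a genuine gap. You assert that since $\phi\in\Ker(\chi)$ ``commutes with every forgetful map'', the induced birational self-map $\overline{\phi}$ of $\mathbb{P}^{n-3}$ stabilizes every line and every rational normal curve through $p_{1},\dots,p_{n-1}$ \emph{individually}. But membership in the kernel only gives the intertwining $\pi_{i}\circ\phi^{-1}=\tilde{\phi}_{i}\circ\pi_{i}$ for some automorphism $\tilde{\phi}_{i}$ of the target, which need not be the identity; hence $\phi$ permutes the fibers of each $\pi_{i}$ rather than fixing them, and $\overline{\phi}$ only preserves the \emph{families}: a general line through $p_{i}$ maps to some a priori different line through $p_{i}$, and a rational normal curve through the $n-1$ points maps to some such curve. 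Your sweeping/intersection argument needs each member fixed, so it collapses at exactly this point. (Note also that the relevant lines are the general lines through a single $p_{i}$, images of fibers of $\pi_{i}$, not the finitely many lines $\langle p_{i},p_{j}\rangle$ of the ``$1$-skeleton'', which are special and can even sit in the base locus of $\overline{\phi}$.) The missing idea is the paper's linear-system bookkeeping, which uses only family-preservation: writing $|\mathcal{H}|\subseteq|\mathcal{O}_{\mathbb{P}^{n-3}}(d)|$ for the system defining $\overline{\phi}$, one gets $\deg(\overline{\phi}(L_{i}))=d-\mult_{p_{i}}\mathcal{H}=1$ for each $i$ and $\deg(\overline{\phi}(C))=(n-3)d-\sum_{i=1}^{n-1}\mult_{p_{i}}\mathcal{H}=n-3$; eliminating the multiplicities forces $d=1$, and only then does ``linear and fixing the projective frame $p_{1},\dots,p_{n-1}$'' yield $\overline{\phi}=\mathrm{Id}$. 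Without this computation the constraints you list do not obviously exclude a nonlinear map permuting the curves within each family.

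A secondary weak point is your surjectivity argument. The paper is deliberately terse here: it derives the surjection $\chi\colon\Aut(X_{k}[n])\to S_{n}$ from Proposition \ref{fac1} together with the key fact that in all these models the $n-1$ points in linear general position have been blown up, delegating the realization of the transpositions to the machinery of \cite{MM}. Your explicit route ($S_{n-1}$ from the projective frame plus a Cremona lift) is in the right spirit, but the claim that the standard Cremona transformation based at $n-2$ of the points ``is resolved by the very blow-ups defining $X_{k}[n]$'' is unjustified and, taken literally, fails for small $k$: resolving that Cremona requires blowing up all the linear spans of its base points up to dimension $n-5$, whereas at step $k$ of Construction \ref{kblusym} only centers of dimension up to $k-1$ have been blown up. So for $k<n-4$ the regularity of the lifted involution cannot be read off from invariance of the blow-up centers, and this step needs the moduli-theoretic argument rather than the naive resolution claim; you correctly flagged it as the delicate point, but flagging is not proving.
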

\begin{proof}
Since step $k = 1$ of Construction \ref{kblusym} we have blown-up $n-1$ points in $\mathbb{P}^{n-3}$. Furthermore the same is true for $\overline{M}_{A_{2,1}[n]}$, which is the step $r=2,s=1$ of Construction \ref{kblu}, and therefore, by Lemma \ref{l1}, for $Y_{h}[n]$ with $n-4\leq h\leq 2n-9$.\\
We may proceed by considering the spaces $X_{k}[n]$ because our proof works exactly in the same way for the spaces $Y_{h}[n]$. The key fact is that in both these classes of spaces we have blown-up $n-1$ points in linear general position in $\mathbb{P}^{n-3}$. So, by Proposition \ref{fac1}, we get a surjective morphism of groups
$$\chi:\Aut(X_{k}[n])\rightarrow S_{n}.$$
Let $\phi\in \Ker(\chi)$ be an automorphism inducing the trivial permutation. Then $\phi$ induces a birational map $\phi_{\mathcal{H}}:\mathbb{P}^{n-3}\dasharrow\mathbb{P}^{n-3}$ preserving the lines $L_{i}$ through $p_{i}$ and the rational normal curves $C$ through $p_{1},...,p_{n-1}$. Let $|\mathcal{H}|\subseteq |\mathcal{O}_{\mathbb{P}^{n-3}}(d)|$ be the linear system associated to $\phi_{\H}$. The equalities 
$$
\begin{tabular}{l}
$\deg(\phi_{\H}(L_{i})) = d - \mult_{p_{i}}\mathcal{H} = 1,$\\ 
$\deg(\phi_{\H}(C)) = (n-3)d - \sum_{i=1}^{n-1}\mult_{p_{i}}\mathcal{H} = n-3.$ 
\end{tabular} 
$$
yield $d = 1$. So $\phi_{\H}$ is an automorphism of $\mathbb{P}^{n-3}$ fixing $n-1$ points in general position, this forces $\phi_{\H} = Id$. Then $\chi$ is injective and $\Aut(\overline{M}_{0,A_{r,s}[n]})\cong S_{n}$.
\end{proof}

\section{Hassett's spaces with zero weights}\label{zw}
In this section we compute the automorphisms of the moduli spaces of weighted pointed curves $\overline{M}_{g,\tilde{A}[n]}$ of Definition \ref{defha}. Recall that we denote by $A\subseteq \tilde{A}$ the set of positive weights and by $N$ the number of zero weights.
\begin{Lemma}\label{efm}
If $g\geq 2$ then all the forgetful morphisms $\overline{M}_{g,\tilde{A}[n]}\rightarrow\overline{M}_{g,\tilde{A}[r]}$ are well defined morphisms. 
\end{Lemma}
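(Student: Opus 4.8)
The plan is to factor an arbitrary forgetful morphism into two kinds of elementary operations --- forgetting points of weight zero and forgetting points of positive weight --- and to show that each kind is a morphism, so that their composition is one as well. Write $\{1,\dots,n\} = P\sqcup Z$, where $P$ indexes the positive weights (so that $A$ is supported on $P$) and $Z$ the zero weights, and let $\{i_1,\dots,i_r\}$ be the retained indices with complement the forgotten set $F = F_P\sqcup F_Z$. I would first forget the zero-weight points $F_Z$, then forget the positive-weight points $F_P$, letting every retained point (of either type) ride along. Since a composition of morphisms is a morphism, it suffices to treat the two steps separately.

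For the zero-weight step I would use the fibre-product description recalled above,
$$\overline{\mathcal{M}}_{g,\tilde{A}[n]}\cong \underbrace{\mathcal{C}_{g,A[n]}\times_{\overline{\mathcal{M}}_{g,A[n]}}\cdots\times_{\overline{\mathcal{M}}_{g,A[n]}}\mathcal{C}_{g,A[n]}}_{N\text{ times}},$$
under which a zero-weight marked point is exactly one universal-curve factor and forgetting it is the projection dropping that factor. Projections out of a fibre product are morphisms, so forgetting any subset of the zero-weight points is a morphism. It is here that the hypothesis $g\geq 2$ enters in its cleanest form: it guarantees that the base $\overline{\mathcal{M}}_{g,A[n]}$ is a genuine Hassett stack even when $A=\emptyset$ (in which case it is $\overline{\mathcal{M}}_{g}$, which exists precisely because $2g-2>0$), so that the fibre-product description and the projections make sense no matter how many positive weights survive. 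For $g=1$ this would break down once all positive weights are forgotten, which is exactly why the clean hypothesis is $g\geq 2$.

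For the positive-weight step I would invoke \cite[Lemma 3.8]{MM}: on the base of the fibre product, which carries only the positive weights $A$, every forgetful map $\overline{M}_{g,A[n]}\to\overline{M}_{g,A[r]}$ is a well-defined morphism for $g\geq 1$. Such a morphism is accompanied by a contraction of universal curves, and I would check that the retained zero-weight sections are carried to honest sections of the target universal curve, so that the construction lifts to the fibre powers. The main obstacle is precisely this compatibility: one must verify that the contraction never destroys $\tilde{A}[r]$-stability. Concretely, a component $C_i$ of genus $g_i$ is collapsed exactly when $2g_i-2+k_i+\sigma_i\leq 0$, where $k_i$ is the number of nodes on $C_i$ and $\sigma_i$ is the sum of the retained positive weights supported on $C_i$; for $g\geq 2$ this forces $g_i=0$ with few special points, so only rational components without moduli are contracted, no positive-weight sections are forced to collide beyond what the weight inequality $\sum a\leq 1$ allows, and the retained zero-weight sections --- which impose no stability constraint, being free to meet nodes and other points --- remain valid sections. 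This yields $\tilde{A}[r]$-stability of the image family and hence shows that the positive-weight step, and therefore the full forgetful map, is a morphism.
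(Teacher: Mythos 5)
Your proposal is correct, but it takes a genuinely different and much heavier route than the paper. The paper's entire proof is the observation that the numerical criterion for the forgetful map is vacuous when $g\geq 2$: since all weights satisfy $a_i\geq 0$, one has $2g-2+a_{i_1}+\cdots+a_{i_r}\geq 2>0$ for any retained subset, and then \cite[Theorem 4.3]{Ha} --- which in Hassett's zero-weight variant of the moduli problem already produces the forgetful morphisms whenever this inequality holds --- finishes in one line. You instead reconstruct the morphism by hand: factoring through projections on the fibre powers of the universal curve for the zero-weight points, then invoking \cite[Lemma 3.8]{MM} and the contraction of universal curves for the positive-weight points. Both steps are sound (zero-weight sections indeed impose no stability constraint, so they ride along through the contraction, and your collapse criterion $2g_i-2+k_i+\sigma_i\leq 0$ correctly rules out $g_i\geq 1$ components when $g\geq 2$), and your decomposition has the virtue of making \emph{visible} why zero weights are harmless --- they are literally coordinates on fibre powers of the universal curve --- and of locating exactly where $g\geq 2$ enters, namely that even the extreme target $\overline{M}_{g}$ exists because $2g-2>0$, which matches the paper's remark that the lemma fails for $g=1$ precisely when all positive weights are forgotten. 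The trade-off is that your ``compatibility check'' in the positive-weight step is itself the content of Hassett's Theorem 4.3 applied to universal curves, so your argument ultimately rests on the same theorem the paper cites, only invoked twice through a longer detour; the paper's proof buys brevity by noting that the theorem already covers the zero-weight setting directly. One small imprecision to tidy if you keep your version: $k_i$ should count the points where $C_i$ meets the closure of its complement (a self-node of $C_i$ contributes twice to $\deg K_C|_{C_i}$), though this does not affect the conclusion.
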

\begin{proof}
If $g \geq 2$ we have $2g-2+a_{i_1}+...+a_{i_r}\geq 2 + a_{i_1}+...+a_{i_r}>0$. To conclude it is enough to apply \cite[Theorem 4.3]{Ha}.   
\end{proof}

\begin{Remark}
Lemma \ref{efm} does not hold for $g = 1$. For instance consider $\overline{M}_{1,\tilde{A}[2]}$ with $\tilde{A}:= (1/3,0)$. The second forgetful morphism is well defined but the first is not, being $2g-2+a_2 = 0$. To avoid this problem when $g = 1$ we will consider the Hassett's spaces $\overline{M}_{1,\tilde{A}[n]}$ such that at least two of the weights are different from zero. 
\end{Remark}

\begin{Lemma}\label{efmg1}
If $\overline{M}_{1,\tilde{A}[n]}$ is a Hassett's space with at least two weights $a_{i_1},a_{i_2}$ different from zero then all the forgetful morphisms $\overline{M}_{g,\tilde{A}[n]}\rightarrow\overline{M}_{g,\tilde{A}[n-1]}$ are well defined morphisms. 
\end{Lemma}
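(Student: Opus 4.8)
The plan is to reduce to Hassett's existence criterion for forgetful morphisms, exactly as in the proof of Lemma \ref{efm}, and then to exploit the hypothesis on the two positive weights. Recall that the forgetful morphism $\overline{M}_{1,\tilde{A}[n]}\rightarrow\overline{M}_{1,\tilde{A}[n-1]}$ dropping the marking $s_j$ is a well defined morphism, by \cite[Theorem 4.3]{Ha}, precisely when the reduced weight datum is admissible, that is when
$$2g-2+\sum_{i\neq j}a_i>0.$$
For $g=1$ this inequality collapses to $\sum_{i\neq j}a_i>0$, so the whole problem reduces to checking that the sum of the surviving weights stays strictly positive after forgetting any single point.

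First I would fix an arbitrary index $j\in\{1,\dots,n\}$ to be forgotten. The key observation is that a forgetful morphism of this type removes exactly one marked point, so it can annihilate at most one of the two distinguished positive weights $a_{i_1},a_{i_2}$. Hence, whatever $j$ we choose, at least one of $a_{i_1},a_{i_2}$ survives in $\tilde{A}[n-1]$: if $j\notin\{i_1,i_2\}$ both survive and $\sum_{i\neq j}a_i\geq a_{i_1}+a_{i_2}>0$, while if $j=i_1$ (respectively $j=i_2$) the weight $a_{i_2}$ (respectively $a_{i_1}$) survives and $\sum_{i\neq j}a_i\geq a_{i_2}>0$ (respectively $\geq a_{i_1}>0$).

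In either case the reduced weight datum satisfies $2g-2+\sum_{i\neq j}a_i=\sum_{i\neq j}a_i>0$, so \cite[Theorem 4.3]{Ha} applies and the corresponding forgetful map is a morphism. As $j$ is arbitrary, all the forgetful morphisms $\overline{M}_{1,\tilde{A}[n]}\rightarrow\overline{M}_{1,\tilde{A}[n-1]}$ are then well defined. I expect essentially no serious obstacle here: the only point to notice is the elementary combinatorial fact that dropping a single marking cannot destroy two positive weights simultaneously, which is exactly why the hypothesis of at least two nonzero weights is the sharp one, as the preceding Remark already illustrates for $g=1$.
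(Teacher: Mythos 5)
Your proposal is correct and matches the paper's own proof essentially verbatim: the paper likewise observes that forgetting a single marking leaves at least one of the two positive weights $a_{i_1},a_{i_2}$ intact, so that $2g-2+\sum_{i\neq j}a_i\geq 2g-2+a_{i_k}>0$, and then invokes \cite[Theorem 4.3]{Ha}. Your write-up is merely a more explicit case analysis of the same one-line argument, so there is nothing to add.
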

\begin{proof}
In any case we have $2g-2+a_{j_1}+...+a_{j_{n-1}}\geq 2g-2+a_{i_1} > 0$ or $2g-2+a_{j_1}+...+a_{j_{n-1}}\geq 2g-2+a_{i_2} > 0$. Again it is enough to apply \cite[Theorem 4.3]{Ha}.   
\end{proof}

The following proposition describes the fibrations of the Hassett's spaces $\overline{M}_{g,\tilde{A}[n]}$. Its proof derive easily from the suitable variations on the proofs of \cite[Proposition 2.7, Lemma 2.8]{MM}.

\begin{Proposition}\label{gkmha}
Let $f:\overline{M}_{g,\tilde{A}[n]}\rightarrow X$ be a dominant morphism with connected fibers.
\begin{itemize}
\item[-] If $g\geq 2$ either $f$ is of fiber type and factorizes through a forgetful morphism $\pi_{I}:\overline{M}_{g,\tilde{A}[n]}\rightarrow\overline{M}_{g,\tilde{A}[r]}$, or $f$ is birational and $\Exc(f)\subseteq\partial\overline{M}_{g,\tilde{A}[n]}$.
\item[-] If $g = 1$, $\phi$ is an automorphism of $\overline{M}_{1,\tilde{A}[n]}$ and $\tilde{A}[n]$ has at least two non-zero weights then any fibration of the type $\pi_{i}\circ\phi$ factorizes through a forgetful morphism $\pi_{j}:\overline{M}_{1,\tilde{A}[n]}\rightarrow\overline{M}_{1,\tilde{A}[n-1]}$.
\end{itemize}
\end{Proposition}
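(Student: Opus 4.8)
The plan is to mimic, in the zero-weight setting, the two arguments that in \cite{MM} produced the weighted analogues of Theorems \ref{bmf} and \ref{GKM}, exploiting the fact that a space with zero weights is an iterated universal curve over the space carrying only the positive weights. Recall from the fiber-product description $\overline{\mathcal{M}}_{g,\tilde{A}[n]}\cong\mathcal{C}_{g,A[n]}\times_{\overline{\mathcal{M}}_{g,A[n]}}\cdots\times_{\overline{\mathcal{M}}_{g,A[n]}}\mathcal{C}_{g,A[n]}$ that forgetting the $N$ zero-weight markings is exactly the composition of the $N$ universal-curve projections, and that by Lemma \ref{efm} (for $g\geq 2$) and Lemma \ref{efmg1} (for $g=1$) all the relevant forgetful morphisms $\pi_{I}$ are genuine morphisms. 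Thus the combinatorial data governing fibrations---forgetful maps and boundary divisors---is available verbatim, and the task reduces to transporting the classification results across the universal-curve tower.

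For the case $g\geq 2$ I would start from a dominant $f\colon\overline{M}_{g,\tilde{A}[n]}\to X$ with connected fibers, choose an ample divisor $H$ on $X$, and set $D:=f^{*}H$, a nef divisor whose numerical class encodes $f$. The core step is the zero-weight version of \cite[Proposition 2.7]{MM}: one shows that $D$ is either the pull-back of a nef divisor via some forgetful morphism $\pi_{I}\colon\overline{M}_{g,\tilde{A}[n]}\to\overline{M}_{g,\tilde{A}[r]}$, or $D$ is big with exceptional locus contained in $\partial\overline{M}_{g,\tilde{A}[n]}$. In the first alternative $f$ is of fiber type and factors through $\pi_{I}$; in the second $f$ is birational with $\Exc(f)\subseteq\partial\overline{M}_{g,\tilde{A}[n]}$. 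To port the argument I would run it on the base $\overline{M}_{g,A[n]}$, where all weights are positive so that Theorem \ref{GKM} and its weighted refinement apply, and then propagate the conclusion upward: restricting $D$ to a fiber of the total forgetful map $\pi\colon\overline{M}_{g,\tilde{A}[n]}\to\overline{M}_{g,A[n]}$ one decides whether $D$ is vertical for $\pi$, in which case it descends to the base, or meets the fibers positively, in which case the additional forgetful maps that forget the zero-weight points enter the classification.

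For $g=1$ I would instead imitate \cite[Lemma 2.8]{MM}, the weighted form of Lemma \ref{g1}. Here $\phi$ is an automorphism, and the hypothesis that $\tilde{A}[n]$ carries at least two non-zero weights guarantees, via Lemma \ref{efmg1}, that the forgetful maps $\pi_{j}\colon\overline{M}_{1,\tilde{A}[n]}\to\overline{M}_{1,\tilde{A}[n-1]}$ are morphisms. The composite $\pi_{i}\circ\phi$ is then a fibration with one-dimensional connected fibers, and the $g=1$ part of Theorem \ref{GKM} offers only two possibilities for the associated nef class: a genuine forgetful-type pull-back, or a tensor product of pull-backs from $\overline{M}_{1,S}$ and $\overline{M}_{1,S^{c}}$ across a separating node. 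I would exclude the latter by a fiber-dimension and connectedness count, since the separating contractions do not produce the one-parameter universal-curve fibers that $\pi_{i}\circ\phi$ is forced to have, leaving exactly the factorization through a forgetful morphism $\pi_{j}$.

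The main obstacle I anticipate is the faithful transcription of the Gibney--Keel--Morrison dichotomy to $\overline{M}_{g,\tilde{A}[n]}$, a space that may be singular and whose markings are allowed to sit at nodes. One must verify that the universal-curve projections behave, for the purpose of classifying nef divisors together with their exceptional loci, exactly like the ordinary forgetful morphisms, and that $\partial\overline{M}_{g,\tilde{A}[n]}$ pulls back correctly from $\partial\overline{M}_{g,A[n]}$. In the $g=1$ case the delicate point is ruling out the separating tensor-product fibrations, which is precisely where the two-non-zero-weight hypothesis is genuinely used.
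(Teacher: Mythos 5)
Your proposal takes essentially the same approach as the paper, whose entire proof consists of the remark that the statement ``derive[s] easily from the suitable variations on the proofs of \cite[Proposition 2.7, Lemma 2.8]{MM}'': you adapt precisely those two results, the weighted analogue of Theorem \ref{GKM} for $g\geq 2$ and the weighted analogue of Lemma \ref{g1} for $g=1$, with Lemmas \ref{efm} and \ref{efmg1} supplying the needed forgetful morphisms. The extra details you supply --- the universal-curve fiber-product description of $\overline{M}_{g,\tilde{A}[n]}$ over $\overline{M}_{g,A[n]}$ and the use of the two-positive-weights hypothesis to rule out the $g=1$ tensor-product fibrations --- are exactly the ``suitable variations'' the paper leaves implicit.
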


From now on we consider the case $g \geq 2$ and when $g = 1$ we restrict two Hassett's spaces having at least two non-zero weights. Let $\f:\overline{M}_{g,\tilde{A}[n]}\to\overline{M}_{g,\tilde{A}[n]}$ be an automorphism and $\pi_i:\overline{M}_{g,\tilde{A}[n]}\to\overline{M}_{g,\tilde{A}[n-1]}$ a forgetful morphism. By Proposition \ref{gkmha} we have the following diagram
\[
  \begin{tikzpicture}[xscale=2.5,yscale=-1.2]
    \node (A0_0) at (0, 0) {$\overline{M}_{g,\tilde{A}[n]}$};
   \node (A0_1) at (1, 0) {$\overline{M}_{g,\tilde{A}[n]}$};
    \node (A1_0) at (0, 1) {$\overline{M}_{g,\tilde{A}[n-1]}$};
    \node (A1_1) at (1, 1) {$\overline{M}_{g,\tilde{A}[n-1]}$};
    \path (A0_0) edge [->]node [auto] {$\scriptstyle{\phi^{-1}}$} (A0_1);
   \path (A1_0) edge [->,]node [auto] {$\scriptstyle{\tilde{\phi}}$} (A1_1);
    \path (A0_1) edge [->]node [auto] {$\scriptstyle{\pi_i}$} (A1_1);
    \path (A0_0) edge [->]node [auto,swap] {$\scriptstyle{\pi_{j_i}}$} (A1_0);
  \end{tikzpicture}
  \]
where $\pi_{j_i}$ is a forgetful map. This allows us to associate to an automorphism a permutation in $S_r$, where $r$ is the number of well defined forgetful maps, and to define a morphism of groups
$$
\begin{array}{cccc}
\chi: & \Aut(\overline{M}_{g,\tilde{A}[n]}) & \longrightarrow & S_{r}\\
 & \phi & \longmapsto & \sigma_{\phi}
\end{array}
$$
where 
$$
\begin{array}{cccc}
\sigma_{\phi}: & \{1,...,r\} & \longrightarrow & \{1,...,r\}\\
 & i & \longmapsto & j_{i}
\end{array}
$$
Note that in order to have a morphism of groups we have to consider $\phi^{-1}$ instead of $\phi$. As for the Hassett's spaces with non-zero weights the image of $\chi$ depends on the weight data. Recall that $A$ is the set of positive weights in $\tilde{A}$. By \cite[Proposition 3.12]{MM} a transposition $i\leftrightarrow j$ of two marked points with weights in $A$ induces an automorphism of $\overline{M}_{g,A[n]}$ and therefore of $\overline{M}_{g,\tilde{A}[n]}$ if and only if $i\leftrightarrow j$ is admissible.\\
Furthermore the symmetric group $S_{N}$ permuting the marked points with zero weights acts on $\overline{M}_{g,\tilde{A}[n]}$. Note that if $i\leftrightarrow j$ is a transposition switching a marked point with positive weight $a_j$ and a marked point with weight zero $a_i$ then $i\leftrightarrow j$ induces just a birational automorphism 
$$\overline{M}_{g,\tilde{A}[n]}\dasharrow \overline{M}_{g,\tilde{A}[n]}$$
because it is not defined on the loci parametrizing curves $[C,x_1,...,x_n]$ where the marked point with weight zero $x_i$ lies in the singular locus of $C$.

\begin{Example}
In $\overline{M}_{1,\tilde{A}[6]}$ with weights $(0,0,1/3,1/3,1/3,2/3)$ consider the divisor parametrizing reducible curves $C_{1}\cup C_{2}$, where $C_{1}$ has genus zero and markings $(1/3,1/3,2/3)$, and $C_{2}$ has genus one and markings $(0,0,1/3)$.
$$\includegraphics[scale=0.3]{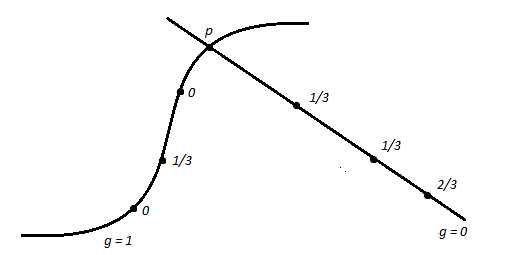}$$
After the transposition $3\leftrightarrow 6$ the genus zero component has markings $(1/3,1/3,1/3)$, so it is contracted. This means that the transposition induces a birational map
 \[
  \begin{tikzpicture}[xscale=2.5,yscale=-1.2]
    \node (A0_0) at (0, 0) {$\overline{M}_{1,\tilde{A}[6]}$};
    \node (A0_1) at (1, 0) {$\overline{M}_{1,\tilde{A}[6]}$};
    \path (A0_0) edge [->,dashed]node [auto] {$\scriptstyle{3\leftrightarrow 6}$} (A0_1);
  \end{tikzpicture}
  \]
contracting a divisor on a codimension two subscheme of $\overline{M}_{1,A[6]}$. Similarly the transposition $1\leftrightarrow 6$ does not define an automorphism because it is not defined on the locus where the first marked point $x_1$ coincides with the node $p = C_1\cap C_2$.
\end{Example}

Let us consider the subgroup $\mathcal{A}_{A[n-N]}\subseteq S_{n-N}$ generated by admissible transpositions of points with positive weights and the symmetric group $S_{N}$ permuting the marked points with zero weights. The actions of $\mathcal{A}_{A[n-N]}$ and $S_{N}$ on $\overline{M}_{g,\tilde{A}[n]}$ are independent and $\mathcal{A}_{A[n-N]}\times S_{N}\subseteq\im(\chi)$. Furthermore by \cite[Lemma 3.13]{MM} we have 
$$\im(\chi) = \mathcal{A}_{A[n-N]}\times S_{N}.$$
Finally, with the suitable variations in the proofs of \cite[Proposition 3.14]{MM} and  \cite[Theorems 3.15, 3.18]{MM} we have the following theorem. 
\begin{Theorem}
If $g\geq 2$ and if $g = 1$, $n\geq 3$, $|A|\geq 2$ then
$$\Aut(\overline{\mathcal{M}}_{g,\tilde{A}[n]})\cong\Aut(\overline{M}_{g,\tilde{A}[n]}) \cong \mathcal{A}_{A[n-N]}\times S_{N}.$$ 
\end{Theorem}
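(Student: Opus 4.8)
The plan is to show that the homomorphism $\chi$ constructed above is an isomorphism onto $\mathcal{A}_{A[n-N]}\times S_N$. Surjectivity, together with the identification of the image, has already been carried out: admissible transpositions of positive-weight markings generate $\mathcal{A}_{A[n-N]}$ and the group $S_N$ permuting the zero-weight markings acts independently, the two actions commuting, by \cite[Proposition 3.12, Lemma 3.13]{MM}. Thus the theorem reduces to two assertions, namely that $\Ker(\chi)$ is trivial and that every automorphism of the coarse space is induced by one of the stack. I treat the first, which is the heart of the matter.

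First I would fix $\phi\in\Ker(\chi)$, so that $\phi$ induces the trivial permutation. Consider the forgetful morphism
$$\pi:\overline{M}_{g,\tilde{A}[n]}\longrightarrow\overline{M}_{g,A[n]}$$
that drops all $N$ zero-weight points, which is a morphism by Lemma \ref{efm} for $g\geq2$ and by Lemma \ref{efmg1} for $g=1$. Since $\phi$ commutes with $\pi$ up to the trivial permutation, it descends to an automorphism $\overline{\phi}$ of $\overline{M}_{g,A[n]}$ lying in the kernel of the corresponding homomorphism. By \cite[Theorems 3.15, 3.18]{MM} this forces $\overline{\phi}=\mathrm{id}$, so that $\phi$ preserves every fiber of $\pi$.

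Next I would exploit the product description
$$\overline{\mathcal{M}}_{g,\tilde{A}[n]}\cong\underbrace{\mathcal{C}_{g,A[n]}\times_{\overline{\mathcal{M}}_{g,A[n]}}\cdots\times_{\overline{\mathcal{M}}_{g,A[n]}}\mathcal{C}_{g,A[n]}}_{N\text{ times}}.$$
Because $\phi$ commutes, up to the trivial permutation, with each of the $N$ projections $p_k$ onto a single factor $\mathcal{C}_{g,A[n]}$ — these being exactly the forgetful morphisms retaining one zero-weight point — and since $\overline{\phi}=\mathrm{id}$ on the base, each projection satisfies $p_k\circ\phi=\Phi_k\circ p_k$ for a relative automorphism $\Phi_k$ of the universal curve $\mathcal{C}_{g,A[n]}$ over the identity of $\overline{M}_{g,A[n]}$. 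Over a point $[C,s_1,\dots,s_{n-N}]$ each $\Phi_k$ restricts to an automorphism $\psi_{[C]}^{(k)}\in\Aut(C)$ fixing the configuration $s_1,\dots,s_{n-N}$.

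The hard part will be showing that each $\Phi_k$ is the identity. For the generic marked curve the group of automorphisms fixing $s_1,\dots,s_{n-N}$ is trivial, so $\psi_{[C]}^{(k)}=\mathrm{id}$ generically and hence, by rigidity of the family, $\Phi_k=\mathrm{id}$; then $p_k\circ\phi=p_k$ for every $k$, and as the $N$ projections jointly separate the points of a fiber this yields $\phi=\mathrm{id}$. This genericity is exactly where the hypotheses enter: for $g\geq2$ one must exclude the hyperelliptic involution in the genus-two stratum by checking it cannot fix the marked points, as in \cite[Theorem 3.15]{MM}, while for $g=1$ one invokes Proposition \ref{gkmha} together with the assumption $|A|\geq2$, following \cite[Theorem 3.18]{MM}. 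The one genuinely new point, relative to \cite{MM}, is that the zero-weight sections may pass through the nodes of $C$, so the fiberwise argument must be run over the boundary as well; the product description above makes it apply there verbatim. Finally, the comparison $\Aut(\overline{\mathcal{M}}_{g,\tilde{A}[n]})\cong\Aut(\overline{M}_{g,\tilde{A}[n]})$ follows as in \cite[Proposition 3.14]{MM}, the generic fiber of $\pi$ carrying no extra infinitesimal automorphisms under our hypotheses.
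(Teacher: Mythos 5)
Your overall strategy --- build $\chi$, quote \cite[Proposition 3.12, Lemma 3.13]{MM} for the image, and reduce the kernel to the positive-weight case via the forgetful map dropping the zero-weight points, then treat the $N$ fiber directions through the universal-curve product --- is reasonable and close in spirit to the paper, which itself only says the result follows ``with the suitable variations'' of the proofs of \cite[Proposition 3.14, Theorems 3.15, 3.18]{MM}. But as written your kernel argument has two genuine gaps. First, the descent step ``$\overline{\phi}=\mathrm{id}$ by \cite[Theorems 3.15, 3.18]{MM}'' fails in a range the theorem explicitly allows: for $g=1$ the cited theorems give $\Aut(\overline{M}_{1,A[m]})\cong\mathcal{A}_{A[m]}$ only for $m\geq 3$, whereas the hypothesis here is $|A|\geq 2$. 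If $|A|=2$ (say $n=3$, weights $(a_1,a_2,0)$), the base of your map $\pi$ is $\overline{M}_{1,A[2]}$, whose automorphism group is $(\mathbb{C}^{*})^{2}$, not trivial; so a kernel element of the big space could a priori induce a nontrivial torus automorphism downstairs, and you must separately prove it does not lift (or handle this case directly), which your proposal does not do.

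Second, the fiberwise step asserts without proof that each $\psi^{(k)}_{[C]}$ fixes the configuration $s_1,\dots,s_{n-N}$, and this assertion is exactly the crux rather than a formality. A relative automorphism $\Phi_k$ of the universal curve over the identity restricts on a fiber only to an automorphism of the fiber; nothing yet forces it to preserve the positive-weight sections, i.e.\ the collision divisors $\{x=s_i\}$ in $\overline{M}_{g,(A,0)}$. For $g=1$ this is fatal as stated: $\Aut(C)$ of an elliptic curve contains all translations, so ``the automorphism group of the generic curve is trivial'' is simply false unless you first pin $\psi^{(k)}$ to fix two sections --- which is precisely where $|A|\geq 2$ must be used, and requires an argument (e.g.\ that $\Phi_k$, having trivial associated permutation, preserves each divisor $\{x=s_i\}$). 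Relatedly, for $g=2$ the theorem allows $|A|=0$, where your proposed exclusion of the hyperelliptic involution ``by checking it cannot fix the marked points'' is vacuous; there the rescue is instead that $(C,x)\mapsto(C,\iota(x))$ is the identity on coarse points, i.e.\ the coarse fiber is $C/\iota$ --- which also shows your statement that $\Phi_k$ ``restricts to an automorphism of $C$'' and that the $N$ projections ``jointly separate the points of a fiber'' is only valid on the automorphism-free locus (this last point is harmless, by density and separatedness, but the elliptic-translation and $|A|=2$ issues above are real holes that the paper's appeal to variations of \cite[Theorems 3.15, 3.18]{MM} is meant to cover and your write-up does not).
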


\end{document}